\numberwithin{equation}{section}
\newtheorem{thm}[equation]{Theorem}
\newtheorem{lem}[equation]{Lemma}
\newtheorem{prop}[equation]{Proposition}
\newtheorem{defn}[equation]{Definition}
\theoremstyle{remark}
\newtheorem{rem}[equation]{{\sf Remark}}
\newcommand{\norm}[1]{\left\Vert#1\right\Vert}
\newcommand{\Real}{\mathbb R}
\newcommand{\na}{\mathbb N}
\newcommand{\en}{\mathbb Z}
\newcommand{\set}[1]{\left\{#1\right\}}
\newcommand{\C}{C\,\,}
\newcommand{\di}{\text{d}}
\newcommand{\h}[1]{\hspace*{#1cm}}
\newcommand{\Mb}{M_{\beta}}
\newcommand{\Ms}[1]{M_{(#1/4,#1]}}
\newcommand{\Mbc}{M_{\beta}^c}
\newcommand{\Fb}{\mathcal{F}_{\beta}}
\newcommand{\F}{\mathcal{F}}
\newcommand{\w}[1]{\widetilde{#1}}
\newcommand{\N}{\mathcal{N}}
\newcommand{\W}{\mathcal{W}}
\newcommand{\D}{\mathcal{D}}
\newcommand{\Q}{\mathcal{Q}}
\newcommand{\A}{\mathrm{A}}
\newcommand{\bi}{\bigskip}
\title{Two-weight norm inequalities for the Local Maximal Function}
\author{M. Ramseyer, O. Salinas and B. Viviani}
\subjclass[2010]{Primary 42B35}
\keywords{Bounded Mean Oscillation, Fractional Integral, Variable Exponent}
\thanks{This research is partially supported
by grants from Consejo
Nacional de Investigaciones Científicas y
Técnicas (CONICET) and Facultad de Ingenier\'{\i}a
Qu\'{\i}mica Universidad Nacional
del Litoral (UNL), Argentina.}
\address{Instituto de Matemática Aplicada del
Litoral CONICET-UNL and Facultad de Ingenier\'{\i}a
Qu\'{\i}mica, Universidad Nacional del Litoral, Santa Fe, Argentina.}
\email{mramseyer@santafe-conicet.gov.ar}
\email{  salinas@santafe-conicet.gov.ar}
\email{  viviani@santafe-conicet.gov.ar}
\date{}
\begin{document}

\begin{abstract}
For a local maximal function defined on a certain family of cubes lying
``well inside'' of $\Omega$, a proper open subset of $\Real^n$, we characterize
the couple of weights $(u,v)$ for which it is bounded from $L^p(v)$ on
$L^q(u)$.
\end{abstract}

\maketitle

\section{Introduction}

Let $\Omega$ be a proper open and non empty subset of $\Real^n$.
Let $Q=Q(x,l)$ be a cube with sides parallel to the axes.
Here $x$ and $l$ denotes its center and half its side length respectively.
For $0 < \beta < 1$ we consider the family of cubes
well-inside of $\Omega$ defined by
\[  \Fb=\set{Q(x,l): ~ x \in \Omega, ~ l < \beta \, \di(x,\Omega^c)}\,, \]
where, as in all of this work, $\di$ denotes the $d_\infty$ metric.
Related to this family we have the following local maximal function on $\Omega$:
\begin{equation}\label{Maximal Local}
\Mb f(x)=\sup_{x \in Q \in \Fb}\frac 1 {|Q|} \int_Q |f(y)|\,dy \,,
\end{equation}
for every $f \in L^1_{\text{loc}}(\Omega)$ and every $x \in \Omega$.

\bi

In $2014$ E. Harboure and the two last authors (\cite{HSV3}) considered this
operator in the more general setting of a metric spaces $X$ instead of
$\Real^n$ with the Lebesgue measure replaced by a Borel measure $\mu$
defined only on $\Omega$ and doubling on the balls of $\Fb$
(i.e.: $~\mu(B(x,2r)) \leq c\,\mu(B(x,r))$, whenever $B(x,2r) \in \Fb$).
The main result of \cite{HSV3} was a characterization of the weights $w$
such that $\Mb$ is bounded from $L^p(\Omega,w\mu)$ to $L^p(\Omega,w\mu)$,
$1 < p < \infty$, that is there exists a constant $C$ such that
\[  \int_\Omega |\Mb f|^p w\,d\mu \leq C \int_\Omega|f|^p w\,d\mu \,,\]
for every function $f \in L^p(\Omega,w\,d\mu)$.
The classes of weights related to this boundedness are a local version
of the well known $A_p$-Muckenhoupt classes, associated to the
Hardy-Littlewood maximal operator (\cite{Muck}), more precisely non
negative functions $w \in L^1_{\text{loc}}(\Omega,w\,d\mu)$ such that
\[   \Big(  \frac1{\mu(B)} \int_B w\,d\mu \Big) \,\,
\Big(  \frac1{\mu(B)} \int_B w^{-\,\frac1{p-1}}\,d\mu \Big)^{p-1} \leq C_\beta \,,\]
for every ball $B$ in $\Fb$.

After seeing this result, it is natural to ask ourselves about the problem
for a couple of weights $(v,w)$. In connection with it, we should
recall that the situation in the case $\Omega=\Real^n$ do not have an
easy answer. In fact, E. Sawyer (\cite{SAW}) proved that the necessary and
sufficient condition is
\[  \int_Q |M(v^{-\frac1{p-1}}\chi_Q)|^p w\,dx \leq \C \int_Q v^{-\frac1{p-1}} dx\,,\]
for every cube $Q \subset \Real^n$. The problem becomes a little worse
if we want to consider the boundedness from $L^p$ to $L^q$ with
$1 < p \leq q < \infty$. In this case, Sawyer again, but this time as a
particular case of his solution of the problem for fractional maximal
(\cite{SAW2}), showed that the condition turns out to be
\begin{equation}\label{Sawyer condition}
\left( \int_Q |M(v^{-\frac1{p-1}}\chi_Q)|^q w\,dx \right)^{\frac1q} \leq
\C \left( \int_Q v^{-\frac1{p-1}} dx \right)^{\frac1p}\,.
\end{equation}

Our setting is even a bit more complicated since the family $\Fb$
does not include all the balls needed to consider $\Omega$ as a metric
space itself. At this point, if we restrict the problem to the case
$p=q$, a simple application of a result due to B. Jawerth (Theorem
$3.1$, p. $382$ \cite{JAW}) allows us to get

\begin{thm}\label{teo1}
  Given $1< p < \infty$, $0 < \beta < 1$.
  Let $(u,v)$ be a pair of weights.
  Then assuming that $\sigma=v^{1-p'}$ is a weight,
  the following statement are equivalent:
    \begin{equation}\label{union finita 1}
    \Mb : L^p(v) \rightarrow L^p(u)\,,
    \end{equation}
    if and only if there is a constant $c$ such that
    \begin{equation}\label{union finita 2}
        \int_F \Mb (\sigma \chi_F)^p \,u
        \leq \C \int_F \sigma < \infty\,,
    \end{equation}
    for all finite unions $F$ of cubes in $\Fb$,
    $F=\cup_{\text{finite}}Q_i, ~ Q_i \in \Fb$; provided
    \begin{equation}\label{union finita 3}
    M_{\beta,\sigma} : L^p(\sigma) \rightarrow L^p(\sigma)\,.
    \end{equation}
    where
    \[  M_{\beta,\sigma}f(x)= \sup_{x \in Q \in \Fb}
    \frac 1 {\sigma(Q)} \int_Q |f(y)|\,\sigma(y)\,dy\,.\]
\end{thm}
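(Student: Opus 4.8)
The plan is to reduce Theorem~\ref{teo1} to B.~Jawerth's two-weight characterization for maximal operators associated to a basis of sets, so the main work is checking that $\Fb$ (and the associated averaging operators) fits the axioms of that theorem. First I would recall Jawerth's setup: given a basis $\mathcal{B}$ of measurable sets with an appropriate Besicovitch-type covering/engulfing property, the maximal operator $M_{\mathcal{B}}$ is bounded from $L^p(v)$ to $L^p(u)$ if and only if a Sawyer-type testing condition holds over finite unions of sets in $\mathcal{B}$, \emph{provided} the operator $M_{\mathcal{B},\sigma}$ (the same maximal operator but averaged against $\sigma\,dx$) is bounded on $L^p(\sigma)$. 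So the first step is to verify that $\Fb$ is a basis to which Theorem~3.1 of \cite{JAW} applies: the cubes in $\Fb$ have bounded eccentricity (they are cubes), and although $\Fb$ does not contain all cubes, the key structural fact is that if $Q,Q'\in\Fb$ intersect then a fixed dilate of the larger one still lies in $\Fb$ (this uses $0<\beta<1$ and the definition $l<\beta\,\di(x,\Omega^c)$, since the distance-to-the-complement function is $1$-Lipschitz in the $d_\infty$ metric). This self-improving/engulfing property is exactly what is needed to run the covering arguments behind Jawerth's theorem within $\Omega$.

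Next I would translate the hypotheses and conclusion of Theorem~\ref{teo1} into Jawerth's language verbatim: statement~\eqref{union finita 1} is $M_{\mathcal{B}}:L^p(v)\to L^p(u)$ with $\mathcal{B}=\Fb$; the testing condition~\eqref{union finita 2} is precisely Jawerth's finite-union testing inequality with $\sigma=v^{1-p'}$; and the side hypothesis~\eqref{union finita 3}, namely $M_{\beta,\sigma}:L^p(\sigma)\to L^p(\sigma)$, is exactly the auxiliary boundedness assumption in \cite{JAW}. The assumption that $\sigma=v^{1-p'}$ is a weight (locally integrable, positive a.e.\ where relevant) is what makes the measure $\sigma\,dx$ a legitimate underlying measure and makes the duality $v\leftrightarrow\sigma$ between $L^p(v)$ and $L^{p'}(\sigma)$ meaningful. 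With the dictionary in place, the equivalence \eqref{union finita 1}$\iff$\eqref{union finita 2} is then a direct citation of Theorem~3.1, p.~382 of \cite{JAW}.

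For completeness I would also indicate the two easy directions so the reader sees why the result is plausible independently: the necessity of~\eqref{union finita 2} is obtained by testing~\eqref{union finita 1} on $f=\sigma\chi_F$ and using $\Mb(\sigma\chi_F)\geq c\,\sigma\chi_F$ on $F$ together with $\int_F(\sigma\chi_F)^p v = \int_F\sigma^p v = \int_F\sigma<\infty$; the finiteness of $\int_F\sigma$ for a finite union of cubes in $\Fb$ uses local integrability of $\sigma$. The sufficiency is the substantive half and is where Jawerth's machinery—a Calder\'on--Zygmund decomposition adapted to $\Fb$, followed by the use of~\eqref{union finita 3} to control the ``far'' part—does the real work; I would not reprove it but simply invoke it.

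The main obstacle I anticipate is purely the verification that $\Fb$ genuinely satisfies the hypotheses of Jawerth's theorem, since $\Fb$ is not the full collection of cubes and the ambient domain is $\Omega\subsetneq\Real^n$ rather than a homogeneous space; one must confirm that the relevant covering lemmas and the Calder\'on--Zygmund stopping-time construction stay inside $\Omega$ and only ever use cubes from $\Fb$. Once the engulfing property above is established, this is routine, and the rest of the proof is bookkeeping: matching notation and quoting \cite{JAW}. I would therefore structure the written proof as (i) a short lemma recording that $\Fb$ is a Jawerth basis, and (ii) a one-paragraph deduction of Theorem~\ref{teo1} from Theorem~3.1 of \cite{JAW}.
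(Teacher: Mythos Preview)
Your proposal is correct and matches the paper's approach exactly: the paper does not give a standalone proof of this theorem but simply states that it is ``a simple application of a result due to B.~Jawerth (Theorem~3.1, p.~382 \cite{JAW}),'' which is precisely the reduction you outline. Your additional care in recording why $\Fb$ qualifies as a basis in Jawerth's sense is more detail than the paper provides, but the strategy is identical.
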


\bi

Leaving aside that we are not getting an answer to the whole problem,
the hypothesis on $(u,v)$ have two drawbacks. In the first place,
integrals over finite unions of cubes must be calculated instead of
only integrals over cubes like in \eqref{Sawyer condition}. In the
second place the conditions involve the operator itself, which
looks worse. The first disadvantage can be overcome by assuming an
extra hypothesis on the weight $v$: a doubling condition related to
$v^{-\frac1{p-1}}$ over balls of $\Fb$. We say that a weight
$u$ satisfies a doubling condition on cubes of $\Fb$,
denoted by $u \in D_\beta$, whenever there exists a constant
$C=C(\beta)$ such that
\[ u(2Q) \leq C_\beta\,u(Q) < \infty \,,\]
for every cube in $\Fb$ such that $2Q \in \Fb$, where $2Q$ means
the concentric cube with side length two times the side length of
$Q$, and $u(Q)=\int_Q u\,dx$.

By assuming a $\D_\beta$ condition on $v^{-\frac1{p-1}}$, an
application of results in \cite{HSV3} shows that our context
fulfill the hypothesis about the boundedness of $M_{\beta,\sigma}$
in Theorem \ref{teo1}. But taking into account the additional
geometric information we get about the sets on which the maximal
is defined (notice that the Theorem of Jawerth is related to general
basis of open sets in $\Real^n$), a better result can be obtained.
Indeed we can prove the following Theorem.

\begin{thm}\label{teo2}
  Given $1<p\leq q < \infty$, $0 < \beta < 1$.
  Let $(u,v)$ be a pair of weights such that
  $\sigma=v^{1-p'} \in D_\beta$, then
    \begin{equation}\label{uno}
        \left( \int_{\Omega}(\Mb f)^q \,u \right)^{1/q}
        \leq \C \left( \int_{\Omega}|f|^p \,v \right)^{1/p}\,,
    \end{equation}
  for every function $f \in L^p(v)$ if and only if
    \begin{equation}\label{dos}
        \left( \int_Q \Mb (\sigma \chi_Q)^q \,u \right)^{1/q}
        \leq \C \left( \int_Q \sigma \right)^{1/p} < \infty\,,
    \end{equation}
  for every cube $Q \in \Fb$.
\end{thm}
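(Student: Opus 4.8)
\emph{Necessity} is obtained by testing \eqref{uno} on $f=\sigma\chi_Q$ for $Q\in\Fb$. Since $\sigma^pv=\sigma$ (because $\sigma=v^{1-p'}$ and $p(1-p')+1=1-p'$), the right-hand side of \eqref{uno} becomes $C(\int_Q\sigma)^{1/p}$, while its left-hand side is at least $(\int_Q\Mb(\sigma\chi_Q)^q\,u)^{1/q}$; the finiteness $\int_Q\sigma<\infty$ is contained in the hypothesis $\sigma\in D_\beta$. This is exactly \eqref{dos}.

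\emph{Sufficiency.} By monotone convergence we may take $f\ge0$ bounded with compact support in $\Omega$, and we set $g=f\sigma^{-1}$, so that $\int_\Omega g^p\sigma=\int_\Omega f^pv$ (because $\sigma^{1-p}=v$) and $\frac1{|Q|}\int_Qf=\frac{\sigma(Q)}{|Q|}\langle g\rangle_{Q,\sigma}$, where $\langle g\rangle_{Q,\sigma}=\frac1{\sigma(Q)}\int_Qg\,\sigma$. Fixing $a>1$ and writing $\Omega_k=\{\Mb f>a^k\}$, $k\in\en$, the decisive step is a Calder\'on--Zygmund type decomposition over the family $\Fb$: exploiting the ``well inside'' constraint $l<\beta\di(x,\Omega^c)$ together with the covering machinery of \cite{HSV3}, one should produce cubes $Q_{k,j}\in\Fb$ with $\frac1{|Q_{k,j}|}\int_{Q_{k,j}}f>a^k$ covering $\Omega_k$, and pairwise disjoint sets $E_{k,j}\subseteq Q_{k,j}$ with $\Mb f\le a^{k+1}$ on $E_{k,j}$, whose union over all $k,j$ exhausts $\{\Mb f>0\}$ up to a null set, and which satisfy a packing estimate \emph{in $\sigma$-measure}, $\sigma(Q_{k,j})\le C\,\sigma(E_{k,j})$. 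It is at this point that the doubling hypothesis $\sigma\in D_\beta$ enters, in place of the $A_\infty$-type information one would invoke in the classical Euclidean argument.

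Granting the decomposition, the remainder is routine. On $E_{k,j}$ one has $\Mb f\le a^{k+1}<a\cdot\frac1{|Q_{k,j}|}\int_{Q_{k,j}}f$, hence
\[
\int_{E_{k,j}}(\Mb f)^qu\ \le\ a^q\Bigl(\tfrac{\sigma(Q_{k,j})}{|Q_{k,j}|}\Bigr)^q\langle g\rangle_{Q_{k,j},\sigma}^q\,u(E_{k,j}).
\]
Since $Q_{k,j}\in\Fb$ and $\Mb(\sigma\chi_{Q_{k,j}})\ge\sigma(Q_{k,j})/|Q_{k,j}|$ on $Q_{k,j}$, the testing condition \eqref{dos} yields $\bigl(\sigma(Q_{k,j})/|Q_{k,j}|\bigr)^qu(E_{k,j})\le\int_{Q_{k,j}}\Mb(\sigma\chi_{Q_{k,j}})^qu\le C\,\sigma(Q_{k,j})^{q/p}$, whence $\int_{E_{k,j}}(\Mb f)^qu\le C\bigl(\sigma(Q_{k,j})\langle g\rangle_{Q_{k,j},\sigma}^p\bigr)^{q/p}$. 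Summing over $(k,j)$ and using $q/p\ge1$ (so that $\sum_i t_i^{q/p}\le(\sum_i t_i)^{q/p}$ for $t_i\ge0$),
\[
\int_\Omega(\Mb f)^qu\ =\ \sum_{k,j}\int_{E_{k,j}}(\Mb f)^qu\ \le\ C\Bigl(\sum_{k,j}\sigma(Q_{k,j})\,\langle g\rangle_{Q_{k,j},\sigma}^p\Bigr)^{q/p}.
\]
Finally, bounding $\langle g\rangle_{Q_{k,j},\sigma}\le M_{\beta,\sigma}g$ on $Q_{k,j}$ and using the packing estimate with the disjointness of the $E_{k,j}$,
\[
\sum_{k,j}\sigma(Q_{k,j})\,\langle g\rangle_{Q_{k,j},\sigma}^p\ \le\ C\sum_{k,j}\int_{E_{k,j}}(M_{\beta,\sigma}g)^p\sigma\ \le\ C\int_\Omega(M_{\beta,\sigma}g)^p\sigma\ \le\ C\int_\Omega g^p\sigma\ =\ C\int_\Omega f^pv,
\]
the last inequality being the $L^p(\sigma)$-boundedness of $M_{\beta,\sigma}$, which holds since $\sigma\in D_\beta$ (by \cite{HSV3}, as recalled above). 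Combining and taking $q$-th roots gives \eqref{uno}.

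I expect the Calder\'on--Zygmund step to be the main obstacle. The family $\Fb$ is not stable under dilations, so there is no ambient dyadic grid and no ``pass to the parent'' device---this is precisely what distinguishes the present situation from both Sawyer's Euclidean theorem \cite{SAW2} and Jawerth's general-basis result \cite{JAW}---and one must use the quantitative well-inside condition to build and chain admissible cubes of $\Fb$. The point of $\sigma\in D_\beta$ is then to let the selected cubes be organized into a family with the $\sigma$-packing property used above; this is the mechanism by which the single-cube condition \eqref{dos} can replace the finite-union condition \eqref{union finita 2} of Theorem \ref{teo1}, and by which one reaches the range $p<q$. Everything else is either standard (the layer-cake splitting, the $q/p\ge1$ summation) or already available in \cite{HSV3}.
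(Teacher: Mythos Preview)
Your necessity argument is fine and matches the paper. The sufficiency sketch, however, has a genuine gap in precisely the step you flag as the main obstacle, and the gap is not merely technical.

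Your argument uses the testing condition \eqref{dos} only through its weakest consequence: from $\Mb(\sigma\chi_{Q})\ge \sigma(Q)/|Q|$ on $Q$ you extract
\[
\Bigl(\tfrac{\sigma(Q)}{|Q|}\Bigr)^q u(Q)\ \le\ C\,\sigma(Q)^{q/p},
\]
which is nothing but the $\A_{p,q}^\beta$ condition of Definition~\ref{DefiApqbeta}. Everything after that point depends only on $\A_{p,q}^\beta$, the $L^p(\sigma)$-boundedness of $M_{\beta,\sigma}$, and the packing estimate $\sigma(Q_{k,j})\le C\,\sigma(E_{k,j})$. If this worked, you would have proved that $\A_{p,q}^\beta$ together with $\sigma\in D_\beta$ already implies \eqref{uno}, making the extra hypothesis $\sigma\in A_\infty^\beta$ in Theorem~\ref{teo3} superfluous. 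That should raise a red flag.

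The failure is in the packing step. By construction $E_{k,j}\subseteq Q_{k,j}\setminus\Omega_{k+1}$, so you need $\sigma(Q_{k,j}\cap\Omega_{k+1})\le(1-c)\,\sigma(Q_{k,j})$. A stopping-time argument can give $|Q_{k,j}\cap\Omega_{k+1}|\le \varepsilon\,|Q_{k,j}|$ for $a$ large, but transferring smallness in Lebesgue measure to smallness in $\sigma$-measure is exactly the $(c,\delta)$ property \eqref{c-delta}, i.e.\ $\sigma\in A_\infty^\beta$. Doubling alone does not do this: there exist doubling measures on $\Real^n$ that are singular with respect to Lebesgue measure, hence not $A_\infty$, and for such $\sigma$ a set $E\subset Q$ with $|E|\ge \tfrac12|Q|$ need not carry any fixed fraction of $\sigma(Q)$. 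So the packing estimate is not obtainable from $\sigma\in D_\beta$, and the argument collapses.

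The paper avoids this entirely. It does not seek any inequality of the form $\sigma(E_j^k)\gtrsim\sigma(\tilde P_j^k)$. Instead it passes to dyadic cubes $P_j^k$ via Lemma~\ref{Metodo C-U} (splitting according to whether $10Q\in\Fb$ or not, and in the latter case using the Whitney-type covering $\W_t$ and the clouds $\W_{t,P_j^k}$), linearizes by introducing the averaging operator $T$ in \eqref{operator T} on the discrete measure space $(\na\times\en,\omega)$, and proves that $T$ is of weak type $(1,q/p)$ by applying the \emph{full} testing condition \eqref{dos} to maximal disjoint subfamilies of the $P_j^k$; interpolation with the trivial $L^\infty$ bound then gives $T:L^p(\sigma)\to L^q(\omega)$ and hence \eqref{uno}. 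The hypothesis $\sigma\in D_\beta$ enters only to compare $\sigma$-measures of adjacent cubes within a single cloud $\W_{t,P_j^k}$ via a finite chain (the estimate $\sigma(P_{j,l}^k)\le C\,\sigma(P_{j,m}^k)$ in the treatment of~$II$), never to produce a Carleson-type packing.
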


\bi

Note that the hypothesis \eqref{dos}
looks like \eqref{Sawyer condition}. However the appearance
of the operator, the second problem we have mentioned, makes
it difficult to check the condition. In the case $\Omega=\Real^n$ C. Pérez
(Theorem 1.1, \cite{PEREZ}) gave a solution by adding an $A_\infty$-condition
on $v^{-\frac1{p-1}}$. We recall that a weight $u$ belongs to the
$A_\infty$ class of Muckenhoupt if there are positive constants
$c$ and $\delta$ such that
\begin{equation}\label{c-delta}
\frac{u(E)}{u(Q)} \leq c\, \left( \frac{|E|}{|Q|} \right)^\delta\,,
\end{equation}
for every measurable set $E \subset Q$ and every cube $Q$.
With this extra assumption, the necessary and sufficient
condition for the boundedness of the maximal is the existence
of a constant $C$ such that
\begin{equation}\label{Apq condition}
\frac{u(Q)^{\frac pq}\big(v^{-\frac1{p-1}}(Q)\big)^{p-1}}{|Q|^p} \leq C\,,
\end{equation}
for every cube $Q$; which sometimes is referred to as $A_{p,q}$ condition.
It is important to note that we cannot apply the solution given by C.
Pérez because, as it was said before, our setting is not even a metric
space. However, it served as a source of inspiration for our second
result. In order to formulate it we introduce a couple of definitions.

\begin{defn}\label{Ainfinitobeta}
Given $0 < \beta < 1$, we say that a weight $u$ belongs to
$A_\infty^\beta$ if it there are positive constants $c$ and $\delta$
such that \eqref{c-delta} holds for every $Q \in \Fb$.
\end{defn}

\begin{defn}\label{DefiApqbeta}
Let $1 < p \leq q < \infty$ and $0 < \beta < 1$.
We say that the weights $u$ and $v$ lies in the class
$\A_{p,q}^{\beta}$ if and only if
\begin{equation}\label{Apqbeta}
\frac{u(Q)}{|Q|}^{p/q}\,\left( \frac{\sigma(Q)}{|Q|} \right)^{p-1} \leq \C,
\end{equation}
for every cube $Q \in \Fb$, where $\sigma=v^{-\frac1{p-1}}$.
In this cases we write $(u,v) \in \A_{p,q}^{\beta}$.
\end{defn}

\bi

Now we are in position to enunciate our second theorem where the reference
to the operator in the hypothesis on the weights is completely avoided.

\begin{thm}\label{teo3}
Let $p$, $q$, $\beta$ and the weights $u$ and $v$ as in the Theorem above.
In addition if $u \in D_\beta$ and
$\sigma=v^{-1/(p-1)}$ belongs to $A_\infty^\beta$, then
\begin{equation}\label{Max beta}
\Mb : L^p(v) \rightarrow L^q(u)\,;
\end{equation}
if and only if
\begin{equation}\label{Apqbeta1}
(u,v) \in \A_{p,q}^{\beta}~.
\end{equation}
\end{thm}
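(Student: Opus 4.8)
The plan is to deduce Theorem \ref{teo3} from Theorem \ref{teo2}. The necessity of \eqref{Apqbeta1} is routine: testing the boundedness \eqref{Max beta} on $f=\sigma\chi_Q$ for a fixed $Q\in\Fb$ and bounding $\Mb(\sigma\chi_Q)$ from below by the average $\frac{\sigma(Q)}{|Q|}$ on $Q$ yields $\big(\frac{\sigma(Q)}{|Q|}\big)^q u(Q)\le \C\, v(\sigma\chi_Q)^{q/p}$, and since $\int_Q\sigma^p v=\int_Q \sigma^{p-1}\sigma v=\int_Q\sigma$ (because $\sigma v = v^{-1/(p-1)}v=v^{-1/(p-1)+1}=v^{(p-2)/(p-1)}$... let me instead use $\sigma v^{?}$; more cleanly $v\sigma = v\cdot v^{-1/(p-1)}$ and $\sigma^{p}v = v^{-p/(p-1)}v = v^{-1/(p-1)} = \sigma$), so $v(\sigma\chi_Q)=\sigma(Q)$ and we obtain \eqref{Apqbeta}. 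So I would only sketch this and move on.

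For sufficiency, by Theorem \ref{teo2} it is enough to verify the testing condition \eqref{dos} for every $Q\in\Fb$, using the hypotheses $u\in D_\beta$, $\sigma\in A_\infty^\beta$, and $(u,v)\in\A_{p,q}^\beta$. The heart of the matter is a good pointwise/level-set estimate for $\Mb(\sigma\chi_Q)$ on $Q$. I would run a Calder\'on--Zygmund / Besicovitch-type stopping-time argument adapted to the family $\Fb$: decompose the level sets $\{x\in Q:\Mb(\sigma\chi_Q)(x)>\lambda\}$ into maximal cubes of $\Fb$ on which the $\sigma$-average exceeds $\lambda$, and split $\int_Q\Mb(\sigma\chi_Q)^q u$ into dyadic annuli $2^k a_0<\lambda\le 2^{k+1}a_0$ where $a_0=\sigma(Q)/|Q|$. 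On each selected cube $R$ one has $\sigma(R)/|R|\sim\lambda$, hence by $(u,v)\in\A_{p,q}^\beta$ one gets $u(R)\le \C\,|R|^{p/q}\big(\sigma(R)/|R|\big)^{-(p-1)p/q}\cdot$(adjusting exponents) $\le \C\,\lambda^{-(p-1)q}\sigma(R)^{\,?}$; combining these over the disjointified selected cubes and summing the geometric series in $k$ should produce exactly $\C\,\sigma(Q)^{q/p}$, which is \eqref{dos}.

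The $A_\infty^\beta$ hypothesis enters precisely to control the overlap losses and to pass from averages over the ambient selected cubes to $\sigma$-measures of their disjoint pieces: if $E=R\setminus(\text{children selected at the next level})$, then by maximality $|E|\ge c|R|$, and \eqref{c-delta} for $\sigma$ gives $\sigma(E)\ge c'\sigma(R)$, so $\sum_{R\in\mathcal{S}_k}\sigma(R)\le \C\sum_R\sigma(E_R)\le \C\,\sigma(Q)$ with the sets $E_R$ pairwise disjoint; this is what makes the geometric summation converge with the right constant. The $D_\beta$ condition on $u$ is needed because the family $\Fb$ is not stable under taking the cube one must enlarge to when a maximal cube touches $\partial\Omega$: when a selected $R$ has $2R\notin\Fb$, one still needs $u$ evaluated on a controlled dilate, and $u\in D_\beta$ (together with the geometric fact, proven in the spirit of \cite{HSV3}, that cubes of $\Fb$ meeting a common point have comparable sidelengths up to the distance-to-$\Omega^c$ scale) supplies $u(\text{dilate})\le \C\,u(R)$.

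I expect the main obstacle to be geometric rather than analytic: making the stopping-time selection work inside $\Fb$, since $\Fb$ is neither a dyadic grid nor closed under the natural doubling operations — maximal cubes need not tile, and a "parent" in $\Fb$ may fail to exist near $\partial\Omega$. I would handle this by first establishing a covering lemma for $\Fb$ (bounded overlap of a Besicovitch selection among cubes $Q\in\Fb$ containing a fixed point, with comparability of sidelengths dictated by $\di(\cdot,\Omega^c)$), analogous to the structural lemmas in \cite{HSV3}, and then carry out the level-set decomposition using that covering in place of dyadic maximality. Once the covering lemma is in hand, the exponent bookkeeping in the geometric sum is routine and yields \eqref{dos}, completing the proof via Theorem \ref{teo2}.
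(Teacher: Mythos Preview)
Your plan takes a genuinely different route from the paper. You propose to deduce Theorem~\ref{teo3} from Theorem~\ref{teo2} by verifying the Sawyer-type testing condition \eqref{dos} directly, via a P\'erez-style stopping-time/Carleson argument using $\sigma\in A_\infty^\beta$. The paper never touches \eqref{dos} in the proof of Theorem~\ref{teo3}. Instead it splits
\[
M_\beta f \le M_{\beta/4}f + M_{(\beta/4,\beta]}f \le 2^n M_\gamma^c f + M_{(\beta/4,\beta]}f
\]
(Lemma~\ref{Max local vs Max centrada}) and bounds each piece separately. For the centered piece $M_\gamma^c$, the paper uses $\sigma\in A_\infty^\beta$ to obtain a reverse H\"older inequality (Lemma~\ref{Ainf-->RHI}), hence an ``openness'' of the $\A_{p,q}^\beta$ condition (Lemma~\ref{lema4.2}: $(u,v)\in\A_{\tilde p,\tilde q}^\beta$ for some $\tilde p<p$, $\tilde q<q$); a Besicovitch covering then gives a weak-type $(\tilde p,\tilde q)$ bound, and Marcinkiewicz interpolation against the trivial $L^\infty$ estimate yields the strong $(p,q)$ bound (Theorem~\ref{Mb'}). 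For the truncated piece $M_{(\beta/4,\beta]}$, the Whitney-type covering $\W_t$ and the cloud estimates (Lemmas~\ref{Metodo HSV}, \ref{solapamiento de nubes controlada}, Remark~\ref{nubes comparables}) give a direct bound using only $(u,v)\in\A_{p,q}^\beta$ and $\sigma\in D_\beta$ (Proposition~\ref{Acotacion de Max truncada}).

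What each approach buys: the paper's decomposition sidesteps precisely the geometric obstacle you identify---running a genuine stopping-time inside $\Fb$, which is not a grid and not closed under parents or dilations---by trading it for interpolation (on the centered part, where Besicovitch is available with no dyadic structure needed) plus a single-scale Whitney estimate (on the truncated part, where all relevant cubes have sidelength comparable to $d(\cdot,\Omega^c)$, so no hierarchy is required). Your route is closer in spirit to P\'erez's original argument and would also work, but you would have to build the local Calder\'on--Zygmund machinery you sketch; that is real work, not just bookkeeping. One small correction: your use of the $(c,\delta)$-form of $A_\infty^\beta$ is stated backwards---\eqref{c-delta} gives \emph{upper} bounds $\sigma(E)/\sigma(R)\le c(|E|/|R|)^\delta$, not lower bounds, so to conclude $\sigma(E)\ge c'\sigma(R)$ from $|E|\ge c|R|$ you must apply \eqref{c-delta} to the complement $R\setminus E$ (and either iterate to a scale where the constant is admissible, or invoke the equivalent reverse form of $A_\infty$).
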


We note that, under the hypothesis of Theorem \ref{teo3},
the classes $\A_{p,q}^{\beta}$ coincides for different values
of $\beta$. So, as is the one-weight case, we can refer to those
weights as local weights (see Lemma \ref{apqcoinciden} in section $4$.)

As an important tool to prove the theorem above
we consider the centered local maximal function
on $\Omega$, namely $\Mbc$ given by
\begin{equation}\label{Maximal Local}
    \Mbc  f(x)=\mathop{\mathop{\sup}_{Q=Q(x,l)}}_{Q \in \Fb}
    \frac 1 {|Q|} \int_Q |f(y)|\,dy \,,
\end{equation}
for every $f \in L^1_{\text{loc}}(\Omega)$ and every $x \in \Omega$.
For this operator we show that the following theorem holds.
We enunciate it here because it is important itself.

\begin{thm}\label{Mb'}
Let $1 < p \leq q < \infty$, $0 < \beta < 1$ and let $u$ and $v$ be two
weights such that $\sigma=v^{-1/(p-1)}$ belongs to $A_\infty^\beta$. Then
\begin{equation}\label{Max cent}
\Mbc : L^p(v) \rightarrow L^q(u)\,;
\end{equation}
if and only if
\begin{equation}\label{condicion para dos pesos}
(u,v) \in \A_{p,q}^{\beta}~.
\end{equation}
\end{thm}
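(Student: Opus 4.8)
The plan is to prove both implications, with the necessity being the easy direction and the sufficiency being where the real work lies. For necessity, I would test the boundedness \eqref{Max cent} on the functions $f=\sigma\chi_Q$ for $Q\in\Fb$. Since for $x$ near the center of $Q$ one has $\Mbc(\sigma\chi_Q)(x)\gtrsim \frac{1}{|Q|}\int_Q\sigma$, integrating over (a fixed fraction of) $Q$ against $u$ gives $\big(\frac{\sigma(Q)}{|Q|}\big)^q u(cQ)\lesssim \big(\int_Q\sigma\big)^{q/p}=\sigma(Q)^{q/p}$, and since $\sigma\in A_\infty^\beta$ controls $u(cQ)$ from below by $u(Q)$ up to a constant — actually here we only need the trivial bound and the geometry of $\Fb$, since $cQ$ and $Q$ are comparable cubes both in $\Fb$ for $c$ close to $1$; a cleaner route is to note $\Mbc(\sigma\chi_Q)\geq \Mbc(\sigma\chi_Q)\chi_Q$ and that the average over $Q$ itself already appears — this yields exactly \eqref{Apqbeta}, i.e. $(u,v)\in\A_{p,q}^\beta$. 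One must be slightly careful that $2Q$ need not lie in $\Fb$, but since the test cube $Q$ is centered at $x$, the centered maximal sees $Q$ itself, so no enlargement is needed and this subtlety does not arise.

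For sufficiency, the strategy is a Calderón–Zygmund / stopping-time decomposition adapted to the family $\Fb$, in the spirit of Pérez's argument (Theorem 1.1, \cite{PEREZ}) but carried out without a metric-space structure. Fix $f\in L^p(v)$, which we may take nonnegative and, by density, nice. For each $x\in\Omega$ and each admissible centered cube $Q=Q(x,l)\in\Fb$, write $\int_Q f = \int_Q f\sigma^{-1/p'}\sigma^{1/p'}$ and split $f = f\sigma^{-1/p'}\cdot\sigma^{1/p'}$; the point is to reduce matters to a level-set estimate. For $\lambda>0$ let $\Omega_\lambda=\{x\in\Omega:\Mbc f(x)>\lambda\}$; using that each competing cube is \emph{centered} at the point being tested, one obtains a Vitali-type covering of $\Omega_\lambda$ by cubes $Q_j\in\Fb$ with $\frac1{|Q_j|}\int_{Q_j}f>\lambda$ and bounded overlap. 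On each $Q_j$ one estimates $\lambda^q u(Q_j)$: writing $\lambda < \frac1{|Q_j|}\int_{Q_j}f \le \frac{1}{|Q_j|}\big(\int_{Q_j}f^p v\big)^{1/p}\big(\int_{Q_j}\sigma\big)^{1/p'}$ by Hölder, and invoking \eqref{Apqbeta} in the form $\frac{u(Q_j)^{1/q}}{|Q_j|}\big(\sigma(Q_j)\big)^{1/p'}\le \C$, gives $\lambda\, u(Q_j)^{1/q}\le \C\big(\int_{Q_j}f^p v\big)^{1/p}$. Raising to the power $q$, summing over $j$, using $q/p\ge 1$ together with the embedding $\ell^p\hookrightarrow\ell^q$ and the bounded overlap of the $\{Q_j\}$, yields $\lambda^q u(\Omega_\lambda)\le \C\big(\int_\Omega f^p v\big)^{q/p}$, i.e. a weak-type $(p,q)$ bound.

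To upgrade the weak-type bound to the strong-type inequality \eqref{Max cent}, the plan is to exploit the $A_\infty^\beta$ hypothesis on $\sigma$ — this is the step I expect to be the main obstacle, since the usual interpolation/extrapolation machinery presumes a metric space and a full Hardy–Littlewood maximal operator, neither of which is available here. The idea is to run a finer stopping-time argument at the level sets $\{\Mbc f > 2^{k}\}$: one builds the Calderón–Zygmund cubes $\{Q_j^k\}$ at height $2^k$, uses the nesting/overlap structure coming from the fact that cubes in $\Fb$ centered at points of $\Omega$ behave dyadically up to bounded overlap, and then gains a \emph{geometric} gain from the $A_\infty^\beta$ condition \eqref{c-delta}: inside each $Q_j^k$, the subset where $f$ is large has small Lebesgue measure relative to $|Q_j^k|$, hence by $A_\infty^\beta$ small $u$-measure relative to $u(Q_j^k)$, which forces the sum $\sum_k 2^{kq} u(\{\Mbc f>2^{k}\}\setminus\{\Mbc f>2^{k+1}\})$ to telescope and be dominated by $\C\big(\int f^p v\big)^{q/p}$. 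The bookkeeping in this last summation — controlling the interaction between consecutive generations of stopping cubes while only having access to cubes well inside $\Omega$ and only the centered maximal operator — is the delicate part; here one uses that $\sigma\in A_\infty^\beta$ precisely to convert the elementary Lebesgue-measure decay of the bad sets into a summable $u$-measure decay, closing the estimate.
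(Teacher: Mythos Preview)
Your necessity argument and your weak-type $(p,q)$ computation are both fine (the ``Vitali-type'' covering you invoke is really Besicovitch, Theorem~\ref{BesiTeo}, which is exactly what the paper uses). The genuine gap is in your upgrade from weak to strong type. You propose to use the $A_\infty^\beta$ hypothesis to show that the ``bad'' subset of each stopping cube $Q_j^k$ has small $u$-measure relative to $u(Q_j^k)$. But the $A_\infty^\beta$ assumption is on $\sigma=v^{-1/(p-1)}$, not on $u$: condition~\eqref{c-delta} gives you $\sigma(E)/\sigma(Q)\le c(|E|/|Q|)^\delta$, and says nothing whatsoever about $u(E)/u(Q)$. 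Since Theorem~\ref{Mb'} places no hypothesis on $u$ beyond being a weight, there is no way to extract $u$-measure decay from $\sigma\in A_\infty^\beta$, and your telescoping step cannot close.

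The paper's route is different and in fact avoids any stopping-time bookkeeping. The role of $\sigma\in A_\infty^\beta$ is purely to provide a Reverse H\"older inequality (Lemma~\ref{Ainf-->RHI}), which is then used in Lemma~\ref{lema4.2} to \emph{self-improve} the $\A_{p,q}^\beta$ condition to an $\A_{\tilde p,\tilde q}^\beta$ condition for some $\tilde p<p$, $\tilde q<q$ with $\tilde p/\tilde q=p/q$. One then runs exactly your Besicovitch--H\"older weak-type computation, but at the exponents $(\tilde p,\tilde q)$, obtaining $\Mbc:L^{\tilde p}(v)\to L^{\tilde q,\infty}(u)$. Combined with the trivial bound $\|\Mbc f\|_\infty\le\|f\|_\infty$, Marcinkiewicz interpolation gives strong type $(p,q)$. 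Note that Marcinkiewicz interpolation is valid between arbitrary measure spaces and does not require any metric or maximal-function structure, so your stated worry that ``the usual interpolation machinery \dots\ is not available here'' is unfounded; it is precisely the tool that finishes the proof.
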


\bi

\begin{rem}
Although the statements of our theorems are in terms of the maximal
operator we want to remark that minor modifications in the proofs
lead us to corresponding results for a fractional maximal function
defined over $\Fb$.
\end{rem}

The structure of the paper is as follows. Section $2$ contains some
useful geometrical lemmas. The proofs of Theorem \ref{teo2} is in
section $3$. Finally the proofs of Theorems \ref{teo3} and \ref{Mb'}
are in section $4$.

\bi

\section{Technical Lemmas}

In this section we present a covering theorem
and several covering results necessary for the proof of results below.
We will write the following well-known theorem adapted to the context
in our work and without proof.

\begin{thm}[Besicovitch Covering Theorem]\label{BesiTeo}
Let $E \in \Real^n$. For each $x \in E$, let $Q_x$
be a cube centered at $x$. Assume that $E$ is bounded or that
$\sup_{x \in E} l_{Q_x} < 1$. Then, there exists a countable
set $E_0 \subset E$ and a constant $C(n) \in \na$ such that
\begin{equation}\label{BesiTeo1}
E \subset \bigcup_{x \in E_0} Q_x\,;
\end{equation}
\begin{equation}\label{BesiTeo2}
\sum_{x \in E_0} \chi_{Q_x} \leq C(n)\,.
\end{equation}
\end{thm}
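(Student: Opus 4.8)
The statement is the classical Besicovitch covering lemma, and since the balls of the $\di$ metric are precisely the axis--parallel cubes occurring here, its usual proof transfers essentially verbatim; I sketch the plan. First I would reduce to the case $R:=\sup_{x\in E}l_{Q_x}<\infty$: if $R=+\infty$ then by hypothesis $E$ is bounded, hence $\operatorname{diam}(E)<\infty$ and some $x\in E$ has $l_{Q_x}>\operatorname{diam}(E)$, so $Q_x\supset E$ and $E_0=\set{x}$, $C(n)=1$, settles it. Assuming $R<\infty$, I would run the greedy ``largest cube first'' selection: having chosen $x_1,\dots,x_{m-1}$, put $E_m=E\setminus\bigcup_{i<m}Q_{x_i}$; stop if $E_m=\emptyset$, otherwise set $a_m=\sup_{x\in E_m}l_{Q_x}\ (\le R)$ and choose $x_m\in E_m$ with $l_{Q_{x_m}}>\tfrac34 a_m$. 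Writing $r_m=l_{Q_{x_m}}$, the sets $E_m$ decrease, so $(a_m)$ is non--increasing, and for $i<j$ one gets $x_j\notin Q_{x_i}$, whence $\di(x_i,x_j)\ge r_i$, together with $r_j\le a_j\le a_i<\tfrac43 r_i$.

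Two facts then drive everything: (i) the concentric cubes $\tfrac13 Q_{x_m}$ (one third the side length) are pairwise disjoint; and (ii) the selected cubes cover $E$. For (i): if $i<j$, the half--sides of $\tfrac13 Q_{x_i}$ and $\tfrac13 Q_{x_j}$ are $\tfrac13 r_i$ and $\tfrac13 r_j<\tfrac49 r_i$, whose sum $\tfrac79 r_i$ is strictly less than $\di(x_i,x_j)\ge r_i$, so the two shrunken cubes cannot meet. For (ii) in the bounded case: if the selection stops we are done; otherwise the disjoint cubes $\tfrac13 Q_{x_m}$ all lie in a fixed bounded set (within $\operatorname{diam}(E)+R$ of any fixed point of $E$), so $\sum_m(\tfrac23 r_m)^n<\infty$, forcing $r_m\to 0$ and hence $a_m\to 0$; a point $z\in E$ lying in every $E_m$ would then satisfy $0<l_{Q_z}\le a_m\to 0$, a contradiction, so every $z\in E$ is covered at some finite stage. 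Taking $E_0=\set{x_m}$ gives \eqref{BesiTeo1} with $E_0$ countable.

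The overlap bound \eqref{BesiTeo2} is the technical heart. Fixing $y$ and letting $J=\set{m:y\in Q_{x_m}}$, one must show $\#J\le C(n)$. Here one combines (i) with the separation $\di(x_i,x_j)\ge r_{\min(i,j)}$ and the comparability $r_j<\tfrac43 r_i$ of the cubes in $J$ through the standard angular/packing argument: split $J$ according to whether $\di(y,x_m)\le\tfrac14 r_m$ or not; for the first class the disjoint one--third cubes can be packed around $y$ only a dimensional number $C'(n)$ of times by a volume count, and for the second class the directions $x_m-y$ are pairwise separated by an angle bounded below in terms of $n$, so there are at most $C''(n)$ of them; this yields $C(n)=C'(n)+C''(n)$. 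I expect this counting to be the step that needs the most care.

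Finally, for $E$ unbounded with $\sup_{x\in E}l_{Q_x}<1$, I would reduce to the bounded case: decompose $E=\bigcup_{k\in\en^n}E^{(k)}$ with $E^{(k)}=E\cap\bigl([0,1)^n+k\bigr)$, apply the bounded case to each $E^{(k)}$ (which is bounded and has all associated radii $<1$), and merge the resulting countable subfamilies. Since every $Q_x$ with $x\in E^{(k)}$ has $l_{Q_x}<1$, it sits inside the $3\times\cdots\times3$ block of unit cubes around $[0,1)^n+k$; hence any point of $\Real^n$ meets selected cubes coming from at most $3^n$ of the indices $k$, so the overlap functions add up to at most $3^n$ times the bounded--case constant, while the covers of the $E^{(k)}$ together cover $E$. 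This produces \eqref{BesiTeo1}--\eqref{BesiTeo2} with $C(n)$ replaced by $3^nC(n)\in\na$, completing the plan.
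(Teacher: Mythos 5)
The paper itself states this theorem ``without proof'' as a classical result, so your sketch can only be judged on its own merits. Its skeleton is the standard Besicovitch argument and most of it is fine: the reduction to $\sup_x l_{Q_x}<\infty$, the greedy selection with the $\tfrac34$-factor, the disjointness of the one-third cubes, the covering argument in the bounded case, and the lattice-block reduction of the unbounded case (with the extra factor $3^n$) are all correct. The gap is exactly where you predicted it: the overlap count. Your angle-separation claim for the second class $J_2=\set{m:\ \di(y,x_m)>\tfrac14 r_m}$ is false as stated. Take $r_i=1$, $a=x_i-y$ with $\Vert a\Vert_\infty=0.26$, and a later cube with $x_j-y=5a$, $r_j=1.32$. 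Then $\di(y,x_i)=0.26\le r_i$, $\di(y,x_j)=1.3\le r_j$, $\di(x_i,x_j)=\Vert 4a\Vert_\infty=1.04>r_i$ (so $x_j\notin Q_{x_i}$), and $r_j<\tfrac43 r_i$; these are all the properties your selection gives you for a pair of selected cubes, both cubes contain $y$, both belong to $J_2$, and yet the directions $x_i-y$ and $x_j-y$ are exactly parallel. So no lower bound on pairwise angles holds in $J_2$, and the bound $C''(n)$ does not follow. (Incidentally, your treatment of $J_1$ also quietly needs the cubes in $J_1$ to have comparable sizes for a volume count; in fact $J_1$ has at most one element, since $i<j\in J_1$ would give $r_i<\di(x_i,x_j)\le\tfrac14 r_i+\tfrac14 r_j$, hence $r_j>3r_i$, contradicting $r_j<\tfrac43 r_i$.)

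The counterexample pinpoints the real issue: since the later cube may be up to $\tfrac43$ times larger, it can contain the \emph{earlier} center, and that is precisely what permits parallel directions. Directional separation is guaranteed only for pairs with $r_j\le r_i$, because then $\di(x_i,x_j)>r_i=\max(r_i,r_j)\ge\max\bigl(\di(y,x_i),\di(y,x_j)\bigr)$, i.e.\ the symmetric non-containment holds (this is the classical $60^{\circ}$-type lemma for balls; note that transferring it from the Euclidean to the $d_\infty$ setting costs dimensional constants and needs its own small argument). The problematic pairs are those with $r_i<r_j<\tfrac43 r_i$, which have comparable sizes and must be absorbed by the packing estimate with the disjoint one-third cubes; combining the two regimes requires the more careful bookkeeping of the standard proofs (Besicovitch's theorem as in Mattila, Evans--Gariepy or de Guzm\'an), not a dichotomy based on the distance from $y$ to the centers. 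Since the paper simply cites the result as well known for the metric $\di$, the cleanest course is to do the same; if you want a self-contained proof, replace your distance-based splitting of $J$ by the size-based one and carry out the separation argument only for pairs enjoying symmetric non-containment.
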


\bi

Now, we need to explain the notion of ``cloud'' of a given cube.
That is, given $0<\beta<1$ and a cube
$Q \in \Fb$, we shall denote the set
\begin{equation}\label{cloud}
\N_{\beta}(Q)=\mathop{\mathop{\bigcup}_{R \cap Q \neq \emptyset}}_{R\in\Fb}R\,,
\end{equation}
and we say that these are the ``cloud'' of $Q$.
This idea was introduced in \cite{HSV3} and the proof of the following
lemmas can be found there in the context of the metric spaces.

\begin{lem}\label{lema1}
Let $Q=Q(x,l) \in \Fb$ such that $10\,Q \not\in \Fb$.
We consider $k_0 \in \en$ such that $2^{k_0-1} \leq \di(x,\Omega^c) < 2^{k_0}$.
Then there exists natural numbers $h_1, h_2$ independent of $Q$ such that
\[  2^{k_0-h_1-1} \leq \di(y,\Omega^c) < 2^{k_0+h_2}\,,\qquad \text{for every } \,
y \in \N_\beta(Q)\,.\]
\end{lem}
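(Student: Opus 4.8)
The plan is to reduce the lemma to a single comparison: for any cube $R=R(z,m)\in\Fb$ that meets $Q$, the quantity $\di(z,\Omega^c)$ is comparable to $\di(x,\Omega^c)$ with constants depending only on $\beta$. Once that is in hand, the two-sided bound for $\di(y,\Omega^c)$ with $y\in\N_\beta(Q)$ follows by moving from $z$ to $y$ inside $R$, and the dyadic form is obtained by a suitable choice of exponents. The one structural fact I will use repeatedly is that $z\mapsto\di(z,\Omega^c)$ is $1$-Lipschitz for the metric $\di$.

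First I would fix $R=R(z,m)\in\Fb$ with $R\cap Q\neq\emptyset$, pick $w\in R\cap Q$, and note $\di(x,z)\le\di(x,w)+\di(w,z)\le l+m$. Feeding this into the Lipschitz inequality $\di(z,\Omega^c)\le\di(x,z)+\di(x,\Omega^c)\le l+m+\di(x,\Omega^c)$ and using $l<\beta\,\di(x,\Omega^c)$ (since $Q\in\Fb$) and $m<\beta\,\di(z,\Omega^c)$ (since $R\in\Fb$) yields $\di(z,\Omega^c)<\frac{1+\beta}{1-\beta}\,\di(x,\Omega^c)$; running the analogous estimate starting from $\di(x,\Omega^c)\le l+m+\di(z,\Omega^c)$ gives $\di(z,\Omega^c)>\frac{1-\beta}{1+\beta}\,\di(x,\Omega^c)$. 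Then for $y\in R$ I would use $\di(y,z)\le m<\beta\,\di(z,\Omega^c)$ together with the Lipschitz property to get $(1-\beta)\di(z,\Omega^c)\le\di(y,\Omega^c)\le(1+\beta)\di(z,\Omega^c)$, and combine with the previous bounds to obtain $\frac{(1-\beta)^2}{1+\beta}\,\di(x,\Omega^c)<\di(y,\Omega^c)<\frac{(1+\beta)^2}{1-\beta}\,\di(x,\Omega^c)$ for every $y\in\N_\beta(Q)$, with constants depending only on $\beta$.

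To finish, I would choose $h_1,h_2\in\na$ with $2^{h_1}\ge(1+\beta)/(1-\beta)^2$ and $2^{h_2}\ge(1+\beta)^2/(1-\beta)$; since $2^{k_0-1}\le\di(x,\Omega^c)<2^{k_0}$, the estimate above gives $\di(y,\Omega^c)>\frac{(1-\beta)^2}{1+\beta}\,2^{k_0-1}\ge 2^{k_0-h_1-1}$ and $\di(y,\Omega^c)<\frac{(1+\beta)^2}{1-\beta}\,2^{k_0}\le 2^{k_0+h_2}$, which is exactly the claim, with $h_1,h_2$ independent of $Q$.

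I expect the main obstacle to be the first comparison: a priori $R$ could be far larger than $Q$, and it is precisely the interplay between $R\cap Q\neq\emptyset$ and the $\Fb$-membership of both cubes that rules this out; everything after it is bookkeeping with the triangle inequality. The only point to watch is that the constants remain finite for all $0<\beta<1$, which they do since $\frac{1-\beta}{1+\beta}>0$, even though they degenerate as $\beta\to1$. I also note that the hypothesis $10\,Q\notin\Fb$ is not actually needed in this Euclidean formulation; it is harmless to carry along and serves only to flag $Q$ as a ``near-boundary'' cube for use in later arguments.
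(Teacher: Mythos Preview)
Your argument is correct. The paper itself does not give a self-contained proof but simply cites claims~1 and~2 in the proof of Lemma~2.3 of \cite{HSV3}; the computations you carry out are precisely the kind of triangle-inequality and Lipschitz estimates one expects those claims to contain, and indeed the same bounds $\di(z,\Omega^c)\le\frac{1+\beta}{1-\beta}\di(x,\Omega^c)$ reappear verbatim later in the paper (e.g.\ in the proof of Proposition~\ref{Acotacion de Max truncada}). So your route is essentially the one the authors have in mind, just written out in full.

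Your closing remark is also on target: the hypothesis $10\,Q\notin\Fb$ plays no role in the inequality itself. It only serves, in the applications that follow, to guarantee that $l$ is bounded \emph{below} by a multiple of $\di(x,\Omega^c)$, which is the extra piece of information needed in the counting arguments of Lemmas~\ref{Metodo HSV} and~\ref{solapamiento de nubes controlada}.
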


\begin{proof}
The proof is a consequence of the claim 1 and 2 contained in the proof of
the lemma 2.3 in \cite{HSV3}.
\end{proof}

\bi

Now, denoting by $\D$ the usual family
of dyadic cubes belonging to $\Fb$ we have the following lemma.

\begin{lem}\label{Metodo HSV}
Let $\Omega$ be an open proper subset of $\Real^n$.
Given $0 < \beta < 1$, for each $t \in \na$ such that $2^{-t} \leq \beta/5$,
there exists a covering $\W_t$ of $\Omega$ by dyadic cubes belonging to $\Fb$
and satisfying the following properties
\renewcommand{\theenumi}{\roman{enumi})}
\renewcommand{\labelenumi}{\theenumi}
\begin{enumerate}
\item \label{Metodo HSV - 1}
If $R=R(x_R,l_R) \in \W_t$, then $10\,R \in \Fb$ and
\[  2^{-t-3}\,\di(x_R,\Omega^c) \h{.2} \leq \h{.2} l_R \h{.2}
\leq \h{.2} 2^{-t-1}\,\di(x_R,\Omega^c)\,.\]
\item \label{Metodo HSV - 3}
There is a number $M$, only depending on $\beta$ and $t$, such
that for any cube $Q_0=Q(x_0,l_0) \in \Fb$ with $10 Q_0 \not \in \Fb$,
the cardinal of the set
\[  \W_t(Q_0)=\set{R \in \W_t\,:~R\cap \N_{\beta}(Q_0) \neq \emptyset}\,, \]
is at most $M$. We will call the union of this cubes as
\[  \W_{t,Q_0} = \mathop{\bigcup}_{R \in \W_t(Q_0)} R\,. \]
\end{enumerate}
\end{lem}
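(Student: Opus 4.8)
The plan is to construct $\W_t$ as a Whitney-type family of dyadic cubes, in the spirit of \cite{HSV3}: around each point of $\Omega$ we place a dyadic cube whose half-side length is a suitable dyadic fraction of the distance to $\Omega^c$, the point being that the admissible range of half-sides is wide enough to always contain a power of $2$. Writing $d(x)=\di(x,\Omega^c)$, I would set
\[
\lambda_1=\frac{2^{-t-3}}{1-2^{-t-3}},\qquad \lambda_2=\frac{2^{-t-1}}{1+2^{-t-1}},
\]
and let $\W_t$ be the family of all dyadic cubes $R=R(x_R,l_R)$ for which there is some $x\in R$ with $\lambda_1\,d(x)<l_R<\lambda_2\,d(x)$. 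Since $2^{-t}\le\beta/5$ forces $t\ge 3$, a short computation gives $\lambda_2/\lambda_1=4\,\frac{1-2^{-t-3}}{1+2^{-t-1}}>2$, so for every $x\in\Omega$ the open interval $\big(\lambda_1 d(x),\lambda_2 d(x)\big)$ contains some $2^{-m-1}$; the dyadic cube $R$ of generation $m$ with $x\in R$ then lies in $\W_t$. Hence $\W_t$ covers $\Omega$.

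To verify property (i), I would fix $R=R(x_R,l_R)\in\W_t$ with witness $x\in R$. Since $\di$ is the $d_\infty$ metric, $|d(x)-d(x_R)|\le|x-x_R|_\infty\le l_R$. From $l_R<\lambda_2 d(x)$ one gets $d(x_R)\ge d(x)-l_R> l_R(\lambda_2^{-1}-1)$, hence $l_R<\frac{\lambda_2}{1-\lambda_2}\,d(x_R)=2^{-t-1}d(x_R)$; from $l_R>\lambda_1 d(x)$ one gets $d(x_R)\le d(x)+l_R< l_R(\lambda_1^{-1}+1)$, hence $l_R>\frac{\lambda_1}{1+\lambda_1}\,d(x_R)=2^{-t-3}d(x_R)$. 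This is the required two-sided bound. Moreover $d(x_R)>0$ and $10\,l_R<10\cdot 2^{-t-1}d(x_R)=5\cdot 2^{-t}d(x_R)\le\beta\,d(x_R)$, so $10R\in\Fb$ (and a fortiori $R\in\Fb$), giving property (i).

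For property (ii), let $Q_0=Q(x_0,l_0)\in\Fb$ with $10Q_0\notin\Fb$ and let $k_0\in\en$ satisfy $2^{k_0-1}\le d(x_0)<2^{k_0}$. By Lemma \ref{lema1}, every $y\in\N_\beta(Q_0)$ obeys $2^{k_0-h_1-1}\le d(y)<2^{k_0+h_2}$. First I would bound the size of $\N_\beta(Q_0)$: any $R'=R'(x',l')\in\Fb$ meeting $Q_0$ has $x'\in\N_\beta(Q_0)$, so $l'<\beta\,d(x')<\beta\,2^{k_0+h_2}$, whence every point of $R'$ lies within $d_\infty$-distance $2l'+l_0<3\beta\,2^{k_0+h_2}$ of $x_0$; thus $\N_\beta(Q_0)$ is contained in a cube of side $C_\beta\,2^{k_0}$ with $C_\beta$ depending only on $\beta$. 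Next, if $R\in\W_t$ meets $\N_\beta(Q_0)$, pick $z\in R\cap\N_\beta(Q_0)$: from $|d(z)-d(x_R)|\le l_R\le 2^{-t-1}d(x_R)$ we get $d(x_R)\in\big[2^{k_0-h_1-2},\,2^{k_0+h_2+1}\big)$, and then property (i) forces $l_R\in\big[2^{k_0-h_1-t-5},\,2^{k_0+h_2-t}\big)$. Hence the generation of $R$ takes at most $h_1+h_2+6$ values, and for each fixed generation $m$ the number of dyadic cubes of side $2^{-m}\ge 2^{k_0-h_1-t-4}$ that meet a cube of side $C_\beta\,2^{k_0}$ is at most $\big(C_\beta\,2^{h_1+t+4}+2\big)^n$. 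Multiplying these two bounds gives $\#\,\W_t(Q_0)\le M$ with $M=M(\beta,t)$.

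The bookkeeping with $d_\infty$-distances and the counting of dyadic cubes are routine; the one substantive ingredient is Lemma \ref{lema1}, which simultaneously confines $\N_\beta(Q_0)$ in scale and in diameter. The step I expect to require the most care is checking that the window $\big(\lambda_1 d(x),\lambda_2 d(x)\big)$ has ratio strictly larger than $2$, so that $\W_t$ is at once a genuine covering and a family of dyadic cubes verifying property (i), together with the simultaneous validity of $10R\in\Fb$; this is precisely where the hypothesis $2^{-t}\le\beta/5$ is used.
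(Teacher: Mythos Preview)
Your argument is correct for the lemma as stated, but it follows a different route from the paper's construction. The paper builds $\W_t$ band by band: it decomposes $\Omega$ into the level sets $\Omega_k=\{x:2^{k-1}\le d(x)<2^k\}$, selects for each $k$ the dyadic cubes of fixed half-side $2^{k-t-2}$ that meet $\Omega_k$, and then performs a subdivision step so that the resulting collections $E_k$ are pairwise disjoint; $\W_t=\bigcup_k E_k$. Property~(i) is then immediate from the choice of scale in each band, and property~(ii) is read off from Lemma~\ref{lema1} together with the fact that the cubes in $\W_t(Q_0)$ live in a fixed number of bands and have a fixed scale in each.

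Your construction is more synthetic: you take \emph{all} dyadic cubes whose half-side falls in the window $(\lambda_1 d(x),\lambda_2 d(x))$ for some interior point $x$, with $\lambda_1,\lambda_2$ reverse-engineered so that the two-sided bound in~(i) comes out exactly. This is cleaner and avoids the band bookkeeping, and your verification of (i) and the counting in (ii) via Lemma~\ref{lema1} are both sound. The one thing to be aware of is that your family is in general \emph{not} pairwise disjoint (since $\lambda_2/\lambda_1>2$, two consecutive dyadic generations can both qualify at the same point), whereas the paper's $\W_t$ is a partition of $\Omega$. Disjointness is not part of the statement of Lemma~\ref{Metodo HSV}, so your proof is valid here; but it is tacitly used later (for instance in Proposition~\ref{Acotacion de Max truncada}, where the authors write $\W_t=\bigcup_j Q_j$ with disjoint $Q_j$, and in the ``chain'' argument inside the proof of Theorem~\ref{teo2}). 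If you intend to use your $\W_t$ downstream, you would need either to pass to a maximal disjoint subfamily (which still covers $\Omega$ and still satisfies (i)--(ii)) or to adapt those later arguments to tolerate bounded overlap.
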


\begin{proof}
We will follow the ideas of Lemma $2.3$ in \cite{HSV3}.
So, we only show how we take the covering $\W_t$.
For $k \in \en$, we consider the bands defined by
\[  \Omega_k=\set{x \in \Omega\,: ~ 2^{k-1} \leq \di(x,\Omega^c) < 2^k}\,. \]

If $\Omega_k$ is non empty, let us consider the collection $G_k$ of all
usual dyadic cubes $Q_j=Q(x_j,l_j)$ such that
\[  l_j=2^{k-t-2} \,, \qquad Q_j \cap \Omega_k \neq \emptyset\,,\]
where $t$ is given as in the hypothesis.
It is clear that $\Omega_k \subset G_k$. Moreover, taking
$y \in Q_j$ and $z \in Q_j \cap \Omega_k$ we get
\[  \di(y,\Omega^c) \leq \di(z,\Omega^c) + \di(y,z) \leq 2^k + 2\,l_j
= 2^k + 2^{k-t-1} < 2^{k+1}\,,\]
and
\begin{equation}\label{ecu15}
\di(y,\Omega^c) \geq \di(z,\Omega^c) - \di(y,z) > 2^{k-1} - l_j
= 2^{k-1} - 2^{k-t-2} > 2^{k-2}\,,
\end{equation}
so the inclusion
\begin{equation}\label{ecu17}
Q_j \subset \Omega_{k-1} \cup \Omega_k \cup \Omega_{k+1}\,,
\end{equation}
holds. However there are not cubes intersecting three bands simultaneously. In fact,
suppose that there exists $z, w \in Q_j$ such that $z \in \Omega_{k-1}$
and $w \in \Omega_{k+1}$. Then
\[  2^{k-1} = 2^k - 2^{k-1} \leq \di(w,\Omega^c) - \di(z,\Omega^c)
\leq \di(w,z) \leq 2\,l_j = 2^{k-t-1}\,.\]

This implies that $t$ is less than or equal to $0$ which is a contradiction.
In conclusion, we can say that, for a fixed $k$
there exists in $G_k$ three classes of cubes
\[  Q_j \cap \Omega_{k-1} \neq \emptyset \,, \qquad \text{ó} \qquad
    Q_j \subset \Omega_k \,, \qquad \text{ó} \qquad
    Q_j \cap \Omega_{k+1} \neq \emptyset \,. \]

Next, for each $k$ we define the new collection $E_k$ as follows:
if either $Q_j \subset \Omega_k$ or $Q_j \cap \Omega_{k+1} \neq \emptyset$
we put the cube $Q_j$ in $E_k$. If
$Q_j \cap \Omega_{k-1} \neq \emptyset$ we consider the $2^n$
dyadic sub-cubes and put them in $E_{k-1}$.
So, we note that $E_k$ contains some cubes from $E_{k+1}$
that have been subdivided into $2^n$ sub-cubes.
Thus, the collections $E_k$ are pairwise disjoint and for each
$Q_j(x_j,l_j) \in E_k$ we have that $l_j=2^{k-t-2}$ and
\[  2^{k-1} < \di(x_j,\Omega^c) \leq 2^{k+1} \,. \]

Now, we are able to define a disjoint collection of dyadic cubes by
\begin{equation}\label{ecu24}
\W_t = \bigcup_k E_k \,.
\end{equation}

This is the family of cubes that we will consider. Then, the properties
of the lemma follows by analogous arguments of \cite{HSV3}.
\end{proof}

\bi

\begin{lem}\label{duplica}
Let $0<\beta<1$, $\Omega \subset \Real^n$ and $\mu$
be a measure doubling on $\Fb$. We consider $t \in \en$
such that $2^{-t} \leq \beta/20$ and the covering $\W_t$
of the Lemma above. Then, for any cube $Q$ such that
$10Q \not\in \Fb$ there exists a constant $K$ depending
only on $\beta$ and the constant of the doubling property
of $\mu$ such that
\[  \mu \big( \W_{t,Q} \big) \leq K \,\nu (Q) \,,\]
where $\W_{t,Q}$ is as in Lemma above.
\end{lem}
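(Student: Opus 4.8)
The plan is to exploit the structure of the covering $\W_t$ together with the finiteness statement in Lemma \ref{Metodo HSV}(\ref{Metodo HSV - 3}) and the doubling property of $\mu$ on $\Fb$. Recall that $\W_{t,Q}$ is a union of at most $M$ cubes $R \in \W_t(Q)$, where $M$ depends only on $\beta$ and $t$ (hence only on $\beta$, since $t$ is fixed by the requirement $2^{-t} \leq \beta/20$). So it suffices to show that each such $R$ satisfies $\mu(R) \leq K' \, \mu(Q)$ with $K'$ depending only on $\beta$ and the doubling constant; summing over the at most $M$ cubes then gives the claim with $K = M K'$.

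To estimate $\mu(R)$ in terms of $\mu(Q)$, first I would fix notation: let $Q = Q(x_0,l_0)$ with $10 Q \notin \Fb$, and let $k_0 \in \en$ be such that $2^{k_0-1} \leq \di(x_0,\Omega^c) < 2^{k_0}$. By Lemma \ref{lema1} there are constants $h_1, h_2$, independent of $Q$, so that every $y \in \N_\beta(Q)$ satisfies $2^{k_0-h_1-1} \leq \di(y,\Omega^c) < 2^{k_0+h_2}$. Since any $R \in \W_t(Q)$ meets $\N_\beta(Q)$, property (\ref{Metodo HSV - 1}) of Lemma \ref{Metodo HSV} pins down the size of $R$: its half-side $l_R$ is comparable to $\di(x_R,\Omega^c)$, which in turn is comparable to $2^{k_0}$ by the Lemma \ref{lema1} bounds. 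Likewise $l_0 < \beta \di(x_0,\Omega^c)$ is at most a fixed multiple of $2^{k_0}$, and because $10 Q \notin \Fb$ one also gets a lower bound on $l_0$ of the form $l_0 \geq c \, 2^{k_0}$ with $c$ depending only on $\beta$ (otherwise $10Q$ would still be well inside $\Omega$). Hence $l_R$ and $l_0$ are comparable, with constants depending only on $\beta$; moreover both cubes lie within a bounded number of the bands $\Omega_j$, so they are contained in a common cube $\widetilde{Q}$ centered at $x_0$ whose half-side is a fixed multiple $2^{N} l_0$ of $l_0$, with $\widetilde{Q}$ still belonging to $\Fb$ (shrinking $t$ or enlarging constants as needed, which is where the hypothesis $2^{-t}\le\beta/20$, rather than merely $\beta/5$, buys us room).

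Now iterate the doubling property of $\mu$ on $\Fb$ finitely many times: starting from $R$, double it $N$-many times to engulf it in $\widetilde{Q}$, obtaining $\mu(R) \leq c_{\mathrm{doub}}^{N} \mu(\widetilde{Q})$; then, since $Q \subset \widetilde{Q}$ with comparable side lengths and all intermediate dilates in $\Fb$, run the doubling property in the other direction to get $\mu(\widetilde{Q}) \leq c_{\mathrm{doub}}^{N'} \mu(Q)$. Combining yields $\mu(R) \leq K' \mu(Q)$. The main obstacle is the bookkeeping in the previous paragraph: one must verify carefully that every dilate used in the chain of doubling inequalities actually lies in $\Fb$ — this is exactly what forces the quantitative control of $\di(x_R,\Omega^c)$ from below (via $10Q\notin\Fb$ and Lemma \ref{lema1}) and the slightly stronger smallness assumption on $2^{-t}$. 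Once the geometry is set up so that a fixed finite number of $\Fb$-admissible doublings connects $R$ to $Q$, the measure estimate is immediate, and summing over the $\leq M$ cubes of $\W_t(Q)$ finishes the proof. (Here $\nu(Q)$ should be read as $\mu(Q)$.)
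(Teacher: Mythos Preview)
Your overall reduction is right: since $\W_{t,Q}$ is a union of at most $M$ cubes $R\in\W_t(Q)$, it suffices to prove $\mu(R)\le K'\,\mu(Q)$ for each such $R$, and your size comparisons $l_R\sim l_0\sim 2^{k_0}$ (with constants depending on $\beta$ and $t$) are also correct. The gap is in the step where you pass from $R$ to $Q$ through a single auxiliary cube $\widetilde Q\in\Fb$.

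Such a cube need not exist. Since $10Q\notin\Fb$ forces $l_0\ge(\beta/10)\,\di(x_0,\Omega^c)$, any cube $\widetilde Q$ centred at $x_0$ with $\widetilde Q\in\Fb$ must satisfy $l_{\widetilde Q}<\beta\,\di(x_0,\Omega^c)\le 10\,l_0$; in other words at most three doublings of $Q$ remain in $\Fb$. On the other hand, a cube $R'\in\Fb$ meeting $Q$ can have half--side as large as $\tfrac{\beta}{1-\beta}\bigl(l_0+\di(x_0,\Omega^c)\bigr)$, so the cloud $\N_\beta(Q)$ (and hence some $R\in\W_t(Q)$) may reach points at distance of order $\tfrac{1}{1-\beta}\,\di(x_0,\Omega^c)$ from $x_0$. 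For $\beta$ close to $1$ this is far outside $10Q$, and the same obstruction arises if you centre $\widetilde Q$ at $x_R$: the largest $\Fb$--cube centred at $x_R$ has half--side $<\beta\,\di(x_R,\Omega^c)$, which need not reach back to $Q$. (A one--dimensional example with $\Omega=(0,\infty)$ and $\beta=0.9$ already exhibits $Q$ and $R$ that cannot be covered by any common cube in $\Fb$.) The stronger hypothesis $2^{-t}\le\beta/20$ only makes the Whitney cubes smaller; it does not enlarge the admissible dilates of $Q$.

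The paper's argument (deferred to Remark~3.2 of \cite{HSV3}) avoids this by a \emph{chain} rather than a single engulfing cube: one links $R$ to a cube $R_0\in\W_t(Q)$ meeting $Q$ through a finite sequence $R=R_1,R_2,\dots,R_n=R_0$ of cubes in $\W_t(Q)$ with $R_i$ and $R_{i+1}$ adjacent and of comparable size. Since each $R_i$ satisfies $10R_i\in\Fb$ by Lemma~\ref{Metodo HSV}\ref{Metodo HSV - 1}, a bounded dilate of $R_i$ still lies in $\Fb$ and contains $R_{i+1}$, so $\mu(R_{i+1})\le C\,\mu(R_i)$; the chain length is at most $M$, giving $\mu(R)\le C^{M}\mu(R_0)$, and one more doubling step (again admissible since $10R_0\in\Fb$ and $R_0$ meets $Q$ with $l_{R_0}\lesssim l_0$) yields $\mu(R_0)\le C'\,\mu(Q)$. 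This chaining device is exactly the one the authors use later in the proof of Theorem~\ref{teo2} when they compare $\sigma(P_{j,l}^k)$ with $\sigma(P_{j,m}^k)$.
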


\begin{proof}
The proof follows the same lines as in Remark $3.2$ in \cite{HSV3}
in the general setting of metric spaces.
\end{proof}

\begin{rem}\label{nubes comparables}
Since $\N_\beta(Q) \subset \W_{t,Q}$
for every cube in $\Fb$, by the Lemma above
we can deduce that
\begin{equation}\label{ecu21}
\mu\big(\N_\beta(Q)\big) \leq \C \mu(Q)\,.
\end{equation}
\end{rem}

\bi

We observe by the construction \eqref{ecu24}
that for each cube $Q_j(x_j,l_j) \in E_k \subset \W_t$ we get
\[  \frac12 \, 2^{-t-2} \leq \frac{l_j}{\di(x_j,\Omega^c)} < 2\,2^{-t-2}\,. \]

In general, we will say that a collection of cubes $\{Q_i\}$ is of Whitney's type if there exists
constants $0 < c_1 < c_2 < 1$ such that
\[   c_1 < l_{Q_i}/\di(x_{Q_i},\Omega^c) < c_2 \,. \]

\begin{lem}\label{solapamiento de nubes controlada}
Let $\{Q_i\}$ be a pairwise disjoint collection of Whitney's type cubes.
Then their clouds have bounded overlapping. More precisely,
there exists a natural number $M>0$ such that
\[  \sum_i \chi_{\N_{\beta}(Q_i)}(x) \h{.2} \leq \h{.2} M\,, \]
for every $x$ in $\Omega$.
\end{lem}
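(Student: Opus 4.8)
The plan is to fix a point $x\in\Omega$ and to bound the number $N(x)$ of indices $i$ with $x\in\N_\beta(Q_i)$ by a constant depending only on $n$, $\beta$ and the Whitney constants $c_1,c_2$. If $N(x)=0$ there is nothing to prove, so assume $x\in\N_\beta(Q_i)$ for at least one $i$ and put $D=\di(x,\Omega^c)>0$. The proof then splits into two steps: first, showing that every cube $Q_i$ which sees $x$ in its cloud has size and location comparable to $D$; second, a volume–packing argument.

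For the first step, fix such an $i$ and write $d_i=\di(x_{Q_i},\Omega^c)$. By definition of $\N_\beta(Q_i)$ there is $R_i=R(x_{R_i},l_{R_i})\in\Fb$ with $R_i\cap Q_i\neq\emptyset$ and $x\in R_i$; choose $z_i\in R_i\cap Q_i$. The Whitney bound $l_{Q_i}<c_2 d_i<d_i$ and the triangle inequality for $\di$ give $(1-c_2)d_i<\di(z_i,\Omega^c)<(1+c_2)d_i$. Inserting this into the defining inequality $l_{R_i}<\beta\,\di(x_{R_i},\Omega^c)$ of $\Fb$ and using the triangle inequality through the points $z_i$ and $x_{R_i}$ yields $\frac{1-c_2}{1+\beta}\,d_i<\di(x_{R_i},\Omega^c)<\frac{1+c_2}{1-\beta}\,d_i$, hence $l_{R_i}<\frac{\beta(1+c_2)}{1-\beta}\,d_i$. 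Since $x\in R_i$, we get on the one hand $D=\di(x,\Omega^c)>(1-\beta)\,\di(x_{R_i},\Omega^c)>\lambda\, d_i$ with $\lambda=\frac{(1-\beta)(1-c_2)}{1+\beta}$, and on the other hand, because both $x$ and $z_i$ lie in $R_i$ and $z_i\in Q_i$, $\di(x,x_{Q_i})\le\di(x,z_i)+\di(z_i,x_{Q_i})\le 2\,l_{R_i}+l_{Q_i}\le C_1 d_i$ with $C_1=\frac{2\beta(1+c_2)}{1-\beta}+c_2$; combining with $D\le d_i+\di(x,x_{Q_i})$ gives $\lambda d_i\le D\le(C_1+1)d_i$ and $\di(x,x_{Q_i})\le C_1\,d_i$, all constants depending only on $\beta$ and $c_2$. (One could instead quote the two–sided size estimates of Lemma \ref{lema1}, but this direct computation is shorter.)

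For the second step, set $\Lambda=C_1+1$. By the first step every center $x_{Q_i}$ counted by $N(x)$ lies in the cube $Q(x,C_1\Lambda D)$, and since $l_{Q_i}<c_2 d_i\le c_2\Lambda D$, each whole cube $Q_i$ is contained in the fixed cube $Q(x,C_2 D)$ with $C_2=(C_1+c_2)\Lambda$. On the other hand the Whitney lower bound gives $l_{Q_i}>c_1 d_i\ge c_1\lambda D$, so $|Q_i|\ge(2c_1\lambda D)^n$. As the $Q_i$ are pairwise disjoint and all contained in $Q(x,C_2 D)$, whose measure is $(2C_2 D)^n$, we conclude $N(x)\le\bigl(C_2/(c_1\lambda)\bigr)^n=:M$, a constant depending only on $n$, $\beta$, $c_1$ and $c_2$; since $x$ was arbitrary this is exactly $\sum_i\chi_{\N_\beta(Q_i)}(x)\le M$ for every $x\in\Omega$. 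The only delicate point is the first step: the constants must be tracked carefully through the passage from $Q_i$ to the auxiliary cube $R_i$ and then to $x$, using nothing but $R_i\in\Fb$ and the two–sided Whitney relation, since $R_i$ itself is a priori uncontrolled; after that, the packing argument is routine.
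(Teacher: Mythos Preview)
Your argument is correct and follows the same overall strategy as the paper: show that every $Q_i$ whose cloud contains $x$ has centre at distance $\lesssim D:=\di(x,\Omega^c)$ from $x$ and half-side comparable to $D$, and then count by disjointness and volume. The only difference is in how you obtain the two-sided bound on $d_i=\di(x_{Q_i},\Omega^c)$. The paper places $x$ in a dyadic band $\Omega_k$ and invokes Lemma~\ref{lema1} to trap each $\N_\beta(Q_i)$ in a fixed range of neighbouring bands, from which the diameter and size estimates follow; you bypass the band machinery and derive $\lambda\,d_i<D\le\Lambda\,d_i$ directly, by chasing the triangle inequality through the auxiliary cube $R_i\in\Fb$. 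Your route is a bit more self-contained (no call to Lemma~\ref{lema1}, which as stated requires $10Q\notin\Fb$ and so does not literally apply to arbitrary Whitney-type cubes), while the paper's gives a cleaner geometric picture via the bands; both end with the same packing step.

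One small bookkeeping slip in your second step: you use $d_i\le\Lambda D$ to place the cubes inside $Q(x,C_2D)$, but the inequalities of the first step only give $d_i\le D/\lambda$ (the other inequality $D\le\Lambda d_i$ yields a \emph{lower} bound on $d_i$). Since in fact $\lambda\Lambda=(1-c_2)(1+c_2)=1-c_2^2<1$, one has $1/\lambda>\Lambda$, so the bound you wrote is slightly too strong. Replacing $\Lambda$ by $1/\lambda$ in the containing cube (and hence in $C_2$) fixes this; the final constant $M$ still depends only on $n,\beta,c_1,c_2$, and the proof goes through unchanged.
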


\begin{proof}
Let $\{Q_i\}$ be a such collection, $x_i$ and $l_i$ their centers and
length sides respectively. Take again the bands $\Omega_k$ as in
Lemma \ref{Metodo HSV}. We consider $x \in \Omega_k$ and assume that
\[   x \in \bigcap_{i \in F} \N_\beta(Q_i)\,, \]
for some family of index $F$. Let us prove that there is a
constant $M$ such that the cardinal of
this family is controlled by $M$ for every point $x \in \Omega$.
By lemma \ref{lema1}, if the center $x_i \in \Omega_{k_i}$, we can say that
\[  \N_\beta(Q_i) \subset \bigcup_{j=k_i-h_1}^{k_i+h_2} \Omega_j\,.\]

Thus, the range of $j$ is independent of $Q_i$ and equal to $h=h_2+h_1$.
Now, since $k_i-h_1 \leq k \leq k_i+h_2$ for every $i \in F$ it is easy to see that
\begin{equation}\label{ecu22}
\bigcup_{i \in F} \N_\beta(Q_i) \subset \bigcup_{j=k-h}^{k+h} \Omega_j \,,
\end{equation}
that is, the range of values that may be the union of the clouds is $2h$.

\bi

Now, suppose that there exists $y, z \in \N_\beta(Q_i) \cup \N_\beta(Q_s)$ with $i,s \in F$ and
$l_i \leq l_s$. Let $P_y$, $P_z$, $P_i$ and $P_s$ be cubes such that
\[  y \in P_y\,,\qquad P_y \cap Q_i \neq \emptyset\,, \qquad \qquad
    z \in P_z\,, \qquad P_z \cap Q_s \neq \emptyset\,,\]
\[  Q_i \cap P_i \neq \emptyset \,,\qquad Q_s \cap P_s \neq \emptyset \qquad
\text{and} \qquad x \in P_i \cap P_s\,.\]
Now, we take as in the figure the points
\[  y_i \in P_y \cap Q_i\,,\qquad  z_s \in P_z \cap Q_s\,,\qquad
x_i \in Q_i \cap P_i \qquad \text{and} \qquad   x_s \in Q_s \cap P_s\,. \]
\begin{center}
  \begin{tikzpicture}[scale=0.5]
  \node at (3,12){\Large{$\boldsymbol{\Omega^c}$}};
  \draw [line width=.01 cm](16,0)--(20,2);
  \draw [line width=.01 cm](0,0)--(20,10);
  \draw [line width=.01 cm](0,4)--(18,13);
  \draw [line width=.01 cm](0,6)--(16,14);
  \draw [line width=.01 cm](0,7)--(14,14);
  \draw [line width=.01 cm,dotted](0,7.5)--(13,14);
  \draw [line width=.02 cm](0,8)--(12,14);
  \node at (19,6.5){\small{$\boldsymbol{\Omega_{k+1}}$}};
  \node at (18,10){\small{$\boldsymbol{\Omega_k}$}};
  \node at (17,13.5){\small{$\boldsymbol{\Omega_{k-1}}$}};
  \draw [fill=black](7.8,7) circle (2pt);
  \node [below] at (7.8,7){\scriptsize{$\boldsymbol{x}$}};
  \draw [line width=.01 cm](12,2.2)--(16,2.2)--(16,6.2)--(12,6.2)--(12,2.2);
  \node at (15,5.2){\small{$P_z$}};
  \draw [fill=black,opacity=.3](10,5.5)--(12.5,5.5)--(12.5,8)--(10,8)--(10,5.5);
  \draw [line width=.01 cm](10,5.5)--(12.5,5.5)--(12.5,8)--(10,8)--(10,5.5);
  \node at (11.5,7){\small{$Q_s$}};
  \draw [line width=.01 cm](7.5,6)--(10.5,6)--(10.5,9)--(7.5,9)--(7.5,6);
  \node at (9,7.7){\small{$P_s$}};
  \draw [line width=.01 cm](4.5,5)--(8,5)--(8,8.5)--(4.5,8.5)--(4.5,5);
  \node at (5.5,7.5){\small{$P_i$}};
  \draw [fill=black,opacity=.3](3,4.6)--(5,4.6)--(5,6.6)--(3,6.6)--(3,4.6);
  \draw [line width=.01 cm](3,4.6)--(5,4.6)--(5,6.6)--(3,6.6)--(3,4.6);
  \node at (3.5,6){\small{$Q_i$}};
  \draw [line width=.01 cm](1,2)--(3.8,2)--(3.8,4.8)--(1,4.8)--(1,2);
  \node at (2,4){\small{$P_y$}};
  \draw [fill=black](1.3,2.6) circle (1pt);
  \node [right] at (1.3,2.6){\scriptsize{$\boldsymbol{y}$}};
  \draw [fill=black](15,3) circle (1pt);
  \node [left] at (15,3){\scriptsize{$\boldsymbol{z}$}};
  \draw [fill=black](12.35,5.7) circle (1pt);
  \node [right] at (12.35,5.7){\scriptsize{$\boldsymbol{z_s}$}};
  \draw [fill=black](3.3,4.7) circle (1pt);
  \node [below] at (3.3,4.7){\scriptsize{$\boldsymbol{y_i}$}};
  \draw [fill=black](4.85,6.2) circle (1pt);
  \node [right] at (4.85,6.2){\scriptsize{$\boldsymbol{x_i}$}};
  \draw [fill=black](10.25,6.5) circle (1pt);
  \node [left] at (10.25,6.5){\scriptsize{$\boldsymbol{x_s}$}};
  \end{tikzpicture}
\end{center}

Then, since all the cubes belongs to $\Fb$ and considering \eqref{ecu22}
we have the following estimation
\begin{eqnarray*}
  \di(y,z) & \leq & \di(y,y_i) + \di(y_i,x_i) + \di(x_i,x) + \di(x,x_s) + \di(x_s,z_s) + \di(z_s,z) \\
    & \leq & 6 \, \beta \, 2^{k+h} \,.
\end{eqnarray*}

On the other hand
\[  l_i > c_1 \di(x_i,\Omega^c) > c_1 \, 2^{k-h-1}\,. \]

Thus, there exists at most
\[  \frac{6 \, \beta \, 2^{k+h}}{l_i} \leq \frac{6 \, \beta}{c_1}\, 2^{k+h-k+h+1} = C_\beta\,, \]
disjoint cubes of the family $\{Q_i\}$.
This fact and \eqref{ecu22} say that the family $F$ is finite and then
there exists a fixed natural number $M$, depending only on $\beta$ such that
\[  \sum_i \chi_{\N_{\beta}(Q_i)}(x) \h{.2} \leq \h{.2} M\,, \]
as we wanted to prove.
\end{proof}

\bi

\begin{lem}\label{Metodo C-U}
Let $f$ be a non-negative, locally integrable function
and $\mu$ be a doubling measure on $\Real^n$.
Suppose that for some $h>0$ and some cube $Q=Q(x_Q,l_Q) \in \Fb$
\[  \frac 1 {|Q|} \int_Q f > h\,.\]
\renewcommand{\theenumi}{(\roman{enumi})}
\renewcommand{\labelenumi}{\theenumi}
\begin{enumerate}
\item \label{Metodo C-U 1}
If $10Q \in \Fb$ then there exists a dyadic cube $P=P(x_P,l_P)$ such that
$Q \subset 5P \in \Fb$ and a positive constant $c_1$, independent of $Q$, such that
\begin{equation}\label{tres-a}
\frac 1 {|P|} \int_P f > c_1\,\,h\,.\end{equation}
\item \label{Metodo C-U 2}
If $10Q \not \in \Fb$ then there exists a dyadic cube $R=R(x_R,l_R)$ such that
$Q \subset \W_{t,R}$ and a positive constant $c_2$, independent of $Q$, such that
\begin{equation}\label{tres-b}
\frac 1 {|R|} \int_R f > c_2\,\,h\,,\end{equation}
where $\W_t$ is as in the Lemma \ref{Metodo HSV}.
\end{enumerate}
\end{lem}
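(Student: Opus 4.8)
The plan is to prove both statements by the same elementary scheme: cover $Q$ by boundedly many dyadic cubes whose side lengths are comparable to $2l_Q$, use the pigeonhole principle to pick one of them, $P$ (resp. $R$), that carries a fixed fraction of $\int_Q f$, and then verify the required nesting and the membership in $\Fb$ by routine $d_\infty$-estimates. (The doubling measure $\mu$ in the hypothesis does not enter this argument; the conclusions only involve Lebesgue measure, so we work with $|\cdot|$ throughout.)

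For (i), let $2^{-k}$ be the largest dyadic length with $2^{-k}\le 2l_Q$, so that $2l_Q<2^{-k+1}$. Then the projection of $Q$ on each axis meets at most three dyadic intervals of length $2^{-k}$, hence $Q$ is contained in the union $P_1\cup\dots\cup P_m$ of at most $3^n$ dyadic cubes of that generation. Since $f\ge 0$, $h|Q|<\int_Q f\le\sum_j\int_{P_j}f$, so some $P:=P_j$ satisfies $\int_P f>h|Q|/3^n$; as $|P|=2^{-kn}\le|Q|$ this gives $\tfrac1{|P|}\int_P f>3^{-n}h$, which is \eqref{tres-a} with $c_1=3^{-n}$. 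Now $P$ meets $Q$ and $l_P=2^{-k-1}\le l_Q<2^{-k}=2l_P$; picking $w\in P\cap Q$, for every $z\in Q$ one has $\di(z,x_P)\le\di(z,w)+\di(w,x_P)\le 2l_Q+l_P<5l_P$, hence $Q\subset 5P$. Moreover $\di(x_P,x_Q)\le l_P+l_Q\le 2l_Q$, so $\di(x_P,\Omega^c)\ge\di(x_Q,\Omega^c)-2l_Q$; since $10Q\in\Fb$ forces $\di(x_Q,\Omega^c)>10l_Q/\beta$, we obtain $\beta\,\di(x_P,\Omega^c)>(10-2\beta)l_Q>8l_Q\ge 5l_Q\ge 5l_P$ (here $0<\beta<1$ is exactly what is used), so $x_P\in\Omega$ and $5P\in\Fb$.

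For (ii) we use the covering $\W_t$ of Lemma \ref{Metodo HSV} (with $2^{-t}\le\beta/5$), a pairwise disjoint family of dyadic cubes contained in $\Fb$. Since $\W_t$ covers $\Omega\supset Q$, the cube $Q$ is contained in the union of those members of $\W_t$ that meet it. Each such $S$ lies in $\Fb$ and meets $Q\in\Fb$, hence $S\subset\N_\beta(Q)$; because $10Q\notin\Fb$, Lemma \ref{lema1} gives $\di(y,\Omega^c)\sim\di(x_Q,\Omega^c)$ for all $y\in\N_\beta(Q)$, and combining this with the Whitney relation $2^{-t-3}\di(x_S,\Omega^c)\le l_S\le 2^{-t-1}\di(x_S,\Omega^c)$ from part (i) of Lemma \ref{Metodo HSV} and with $l_Q\sim\beta\,\di(x_Q,\Omega^c)$ (immediate from $Q\in\Fb$ and $10Q\notin\Fb$) yields $l_S\sim l_Q$, with constants depending only on $n$, $\beta$, $t$. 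Thus these cubes have volume comparable to $|Q|$, are pairwise disjoint, and all lie inside a fixed dilate of $Q$, so there are at most $M'=M'(n,\beta,t)$ of them, say $S_1,\dots,S_m$. Pigeonholing $h|Q|<\int_Q f\le\sum_i\int_{S_i}f$ produces $R:=S_{i_0}$ with $\int_R f>h|Q|/M'$, and $|R|\le C|Q|$ gives $\tfrac1{|R|}\int_R f>c_2 h$ with $c_2=(CM')^{-1}$, which is \eqref{tres-b}. Finally $R\in\W_t$ is dyadic, and since $R$ meets $Q\in\Fb$ we have $Q\subset\N_\beta(R)$; hence every $S\in\W_t$ meeting $Q$ also meets $\N_\beta(R)$, i.e.\ $S\in\W_t(R)$, so $Q\subset\bigcup\{S\in\W_t:S\cap Q\neq\emptyset\}\subset\W_{t,R}$, as required.

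The only real content is the geometric bookkeeping behind the two inclusions. In (i) one must keep $P$ no larger than $Q$ (the generation $k$ with $2^{-k}\le 2l_Q$), so that the slack in $10Q\in\Fb$ survives the dilation to $5P$; the inequality $\beta<1$ turns $10-2\beta$ into a number exceeding $5$, which is precisely the point. In (ii) the crux is the uniform bound on the number of $\W_t$-cubes hitting $Q$: this is where Lemma \ref{lema1} (comparability of $\di(\cdot,\Omega^c)$ on $\N_\beta(Q)$), the Whitney size relation of Lemma \ref{Metodo HSV}, and the disjointness of $\W_t$ all come together, and it is the step I expect to require the most care.
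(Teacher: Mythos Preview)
Your proof is correct and follows essentially the same scheme as the paper's: in part (i) you cover $Q$ by at most $3^n$ dyadic cubes of comparable side length, pigeonhole to get $P$, and then verify $Q\subset 5P\in\Fb$ by the same $d_\infty$-estimates the paper uses; in part (ii) you cover $Q$ by the boundedly many members of $\W_t$ that meet it, pigeonhole, and then verify $Q\subset\W_{t,R}$ via $Q\subset\N_\beta(R)$.

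The only differences are cosmetic. For the cardinality bound in (ii) the paper simply invokes Lemma~\ref{Metodo HSV}\,\ref{Metodo HSV - 3} (your cubes $S$ meet $Q\subset\N_\beta(Q)$, hence lie in $\W_t(Q)$, which has at most $M$ elements since $10Q\notin\Fb$), whereas you rederive it by a volume argument; the step ``all lie inside a fixed dilate of $Q$'' that you use there is immediate from $l_S\sim l_Q$ together with $S\cap Q\neq\emptyset$, so no gap. Your write-up is in fact more complete than the paper's in one respect: the paper does not spell out why the chosen $R$ satisfies $Q\subset\W_{t,R}$, while you do.
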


\bi

\begin{proof}
Let $Q=Q(x_Q,l_Q)$ be a such cube of the hypothesis.
For \emph{\ref{Metodo C-U 1}} we consider $k \in \en$
such that $2^{k-1} < l_Q \leq 2^k$.
Considering dyadic cubes with side length equal to $2^{k-1}$,
there exists a finite collection of dyadic cubes $P_1,\ldots,P_N$,
with $1 \leq N \leq 3^n$, which intersect the interior of $Q$.
Calling $P$ any of these and taking $z \in Q \cap P$, we have
\[  \di(x_Q,x_P) \leq \di(x_Q,z) + \di(z,x_P) \leq
\frac12 \, l_Q + \frac12 \, l_P \leq 2^{k-1} + 2^{k-2} = \frac32\,l_P\,.\]

Now, if $w \in Q$ we get
\[  \di(w,x_P) \leq \di(w,x_Q) + \di(x_Q,x_P) \leq \frac12 \, l_Q + \frac32\,l_P \leq \frac52\,l_P \,,\]
which implies that $Q \subset 5P$. Moreover, for each $z \in 5P$
\[  \di(z,x_Q) \leq \di(z,x_P) + \di(x_Q,x_P) \leq \frac52 \, l_P + \frac32\,l_P = 4\,l_P\,.\]

Thus, we can deduce that
$Q \subset 5P \subset 8Q$. Now, a simpler estimation
show that $5P \in \Fb$ whenever $10Q$ do it. In fact
\[  l_P < l_Q \leq \frac{\beta}{10} \di(x_Q,\Omega^c) \leq
\frac{\beta}{10} \Big( \di(x_Q,x_P) + \di(x_P,\Omega^c) \Big) =
\frac{\beta}{5} l_P + \frac{\beta}{10} \di(x_P,\Omega^c)\,, \]
then, recalling that $0 < \beta < 1$ we get
\[  \frac12 \, l_P < (1-\frac\beta5) \, l_P < \frac{\beta}{10} \, \di(x_P,\Omega^c)\,,\]
this implies that $5l_P < \beta \, \di(x_P,\Omega^c)$ as required.
Furthermore, for at least one of these dyadic cubes, which we denote by $P$,
\[  \int_P f > \frac{h\,|Q|}{3^n}\,,\]
since otherwise we get a contradiction. In fact
\[  \int_Q f \leq \sum_{t=1}^{N} \int_P f \leq \frac{N\,h\,|Q|}{3^n} \leq h\,|Q|\,.\]

Now, since $5P \subset 8Q$, the Lebesgue measure say that
inequality \eqref{tres-a} follows with $c_1=5^n/24^n$.

\smallskip

In order to prove \emph{\ref{Metodo C-U 2}}, by the Lemma
\ref{Metodo HSV}{\em \ref{Metodo HSV - 3}} the cardinal
of $\W_{t,Q}$ is finite and independent of $Q$,
and its cubes are comparable size with $Q$, the same argument
can be applied to take one of them, namely $R$ such that \eqref{tres-b} holds.
\end{proof}

\bi

\section{Proof of the Results}

\begin{proof}[Proof of the Theorem \ref{teo2}]
Assume that \eqref{uno} holds. In particular,
it is for $f=\sigma \chi_Q$, $Q \in \Fb$. Then
\[  \left( \int_Q \Mb (\sigma \chi_Q)^q \,u \right)^{1/q}
\leq \left( \int_{\Omega} (\sigma \chi_Q)^p \,v \right)^{1/p}
= \left( \int_Q \sigma \right)^{1/p} < \infty.\]

To show that \eqref{dos} implies \eqref{uno}, fix a non negative function
$f \in L^p(\Omega,v)$. By a standard argument, we may assume without loss of
generality that $f$ is bounded and has compact support. Now, for each
$k \in \en$, we consider the sets
\[  A_k=\set{x \in \Omega: 2^k < \Mb f(x) \leq 2^{k+1}}\,.\]
Considering a collection $\{Q_x^k\}_{x \in A_k}$ of cubes such that
\[  \frac 1 {|Q_x^k|} \int_{Q_x^k} |f| > 2^k\,,\]
we define
\[  \Q_1=\set{Q_x^k: ~ 10Q_x^k \in \Fb}\quad \text{and}\quad
\Q_2=\set{Q_x^k: ~ 10Q_x^k \not \in \Fb}\,.\]
For the cubes in $\Q_1$ by \emph{\ref{Metodo C-U 1}} of Lemma
\ref{Metodo C-U} there exists a dyadic cube $P_x^k$ such that $Q_x^k \subset 5P_x^k$,
$5P_x^k \in \Fb$ and
\[  \frac 1 {|P_x^k|} \int_{P_x^k} f > c\,2^k\,.\]
On the other hand, for the cubes in $\Q_2$ we take
$t$ such that $2^{-t} \leq \beta/20$ and consider the covering
$\W_t$ of the Lemma \ref{Metodo HSV}. Now, we
can apply \emph{\ref{Metodo C-U 2}}
of Lemma \ref{Metodo C-U} to have a dyadic cube $R_x^k$
such that its cloud contain the original cube $Q_x^k$ and
\[  \frac 1 {|R_x^k|} \int_{R_x^k} f > c\,2^k\,.\]
Since the $P_x^k$'s and $R_x^k$'s are dyadic and bounded in size
(since $f$ has compact support) we can obtain a maximal disjoint
sub-collection $\set{P_j^k}$ such that for each $x$, either
$Q_x^k \subset 5P_j^k$ or $Q_x^k \subset \W_{t,P_j^k}$ for some $j$.

\smallskip

We define $\w{P}_j^k = 5P_j^k$, if $P_j^k$ was chosen from a cube in $\Q_1$
and $\w{P}_j^k = \W_{t,P_j^k}$ if $P_j^k$ was chosen from a cube in $\Q_2$.
It is clear that $A_k \subset \cup_j \w{P}_j^k$. Now we
define the following sets:
\[ E_1^k = \w{P}_1^k \cap A_k\,, ~ E_2^k = \big( \w{P}_2^k
\backslash \w{P}_1^k \big) \cap A_k \,, ~ \ldots \,,
E_j^k = \big( \w{P}_j^k \backslash \mathop{\cup}_{i=1}^{j-1}
\w{P}_i^k \big) \cap A_k\,, \ldots \]

Thus $A_k = \cup_j E_j^k$ and since the $A_k$'s are disjoint,
the sets $E_j^k$'s are pairwise disjoint for all $j$ and $k$.

In order to prove \eqref{uno} we proceed as follows
\begin{eqnarray*}
  \int_{\Omega}(\Mb f)^q \,u
   & = & \sum_k \int_{A_k}(\Mb f)^q \,u \\
   & = & \sum_{j,k} \int_{E_j^k}(\Mb f)^q \,u \\
   & \leq & \C \sum_{j,k} u(E_j^k)\,2^{kq} \\
   & \leq & \C \sum_{j,k} u(E_j^k)\, \bigg( \frac 1 {|P_j^k|} \int_{P_j^k} f \bigg)^q\,.
\end{eqnarray*}

Now, multiplying and dividing by $\left( \int_{\w{P}_j^k} \sigma \right)^q$ we have
\begin{eqnarray*}
  \int_{\Omega}(\Mb f)^q \,u & \leq &
  \C \sum_{j,k} u(E_j^k)\, \bigg( \frac 1 {|P_j^k|} \int_{\w{P}_j^k} \sigma  \bigg)^q
  \left( \frac{\int_{P_j^k} (f/\sigma)\,\sigma}{\int_{\w{P}_j^k} \sigma} \right)^q\\
  & = & \C \int_X T(f/\sigma)^q\,d\omega \,,
\end{eqnarray*}
where $X = \na \times \en$, the discrete measure $\omega$ on $X$ is given by
\[  \omega(j,k) = u(E_j^k)\, \bigg( \frac1 {|P_j^k|} \int_{\w{P}_j^k} \sigma \bigg)^q \,,\]
and for a non-negative, measurable function $g$, the operator $T$ is defined by
\begin{equation}\label{operator T}
Tg(j,k) = \frac{\int_{P_j^k} g\,\sigma}{\int_{\w{P}_j^k} \sigma}\,.
\end{equation}

By interpolation's theory it is sufficient to show that
$T$ is weak-type $(1,q/p)$ for getting \eqref{uno}.
For this, fix $g$ bounded and with compact support. Then for $\lambda > 0$ we consider
\[  B_\lambda=\set{(j,k)\in X\,:~Tg(j,k) > \lambda}\,.\]

By the definition of $\w{P}_j^k$ we have
\begin{eqnarray*}
 B_\lambda^1 &=& \set{(j,k)\in X\,:~ T g(j,k) > \lambda\,,~\w{P}_j^k=5P_j^k} \,;\\
 B_\lambda^2 &=& \set{(j,k)\in X\,:~ T g(j,k) > \lambda\,,~\w{P}_j^k=\W_{t,P_j^k}} \,.
\end{eqnarray*}

Then, we can estimate
\[  \omega(B_{\lambda}) = \sum_{(j,k)\in B_{\lambda}} u(E_j^k)\,
  \bigg( \frac 1 {|P_j^k|} \int_{\w{P}_j^k} \sigma \bigg)^q
  = \sum_{(j,k)\in B_\lambda^1} \h{.3} + \h{.2}
  \sum_{(j,k)\in B_\lambda^2} = I + II\,. \]

Remembering that $5P_j^k \in \Fb$ and since $E_j^k \subset 5P_j^k$,
it is not difficult to see that
\[  I \leq \sum_{(j,k)\in B_\lambda^1} \int_{E_j^k}
M_\beta(\sigma\chi_{5P_j^k})^q\,u \,. \]

Let now $\{P_i\}$ be the maximal disjoint sub-collection of
$\{P_j^k\,:~(j,k) \in B_\lambda^1\}$. Then, since the $E_j^k$ are pairwise
disjoint and the hypothesis \eqref{dos} we have
\begin{eqnarray*}
  I & \leq & \sum_i \sum_{P_j^k \subset P_i} \int_{E_j^k} M_\beta(\sigma\chi_{5P_j^k})^q\,u \\
    & \leq & \sum_i \int_{5P_i} M_\beta(\sigma\chi_{5P_i})^q\,u \\
    & \leq & \C \sum_i \left( \int_{5P_i} \sigma \right)^{q/p}\,.
\end{eqnarray*}

Finally, by the definition of $B_\lambda^1$, the cubes $P_i$'s are disjoint and $q/p \geq 1$
\begin{eqnarray}
\nonumber I    & \leq & \C \sum_i \left( \frac1\lambda \int_{P_i} g\,\sigma \right)^{q/p} \\
\label{ecu23}  & \leq & \C \left( \frac1\lambda \int_{\Omega} g\,\sigma \right)^{q/p} \,.
\end{eqnarray}

The estimation above follows similar lines of the proof of Theorem $1.1$ in
\cite{CUr}.

\bi

Now, we need to estimate $II$. For this, let $(j,k) \in B_\lambda^2$ and we write
\[  \W_{t,P_j^k} = \mathop{\bigcup}_{m=1}^{t_j^k}P_{j,m}^k \,, \]
where $P_{j,m}^k \in \W_t(P_j^k)$ are disjoint. By Lemma \ref{Metodo HSV}
{\em \ref{Metodo HSV - 3}} we know that $t_j^k \leq M$ where
$M$ is independent of the cubes. Then, considering $t_j^k$ disjoint
sets defined by
\[  E_{j,m}^k = \big( P_{j,m}^k \backslash \mathop{\cup}_{i=1}^{j-1}
\w{P}_i^k \big) \cap A_k\,.\]
So, we get
\[  E_j^k=\bigcup_{m=1}^{t_j^k}E_{j,m}^k \,, \]
where the sets $E_{j,m}^k$ are disjoint in $j$, $m$ and $k$. Then
\begin{eqnarray*}
  II &=& \sum_{(j,k)\in B_\lambda^2} u(E_j^k)\,
  \bigg( \frac 1 {|P_j^k|} \int_{\W_{t,P_j^k}} \sigma \bigg)^q\\
  &=& \sum_{(j,k)\in B_\lambda^2} \sum_{m=1}^{t_j^k}
  \int_{E_{j,m}^k} \bigg( \frac 1 {|P_j^k|} \sum_{l=1}^{t_j^k} \int_{P_{j,l}^k} \sigma \bigg)^q  u\,.
\end{eqnarray*}

Now, we consider for each $P_{j,l}^k$ a finite chain joining $P_{j,l}^k$
with $P_{j,m}^k$, that is, a finite subset of $\W_t(P_j^k)$,
say $R_1,\ldots,R_n$ which are all different, with $R_1=P_{j,l}^k$ and
$R_n=P_{j,m}^k$ and for $R_i$ and $R_{i+1}$ neither
$R_i \subset R_{i+1}$ or $R_{i+1} \subset R_i$.

Moreover, part \emph{\ref{Metodo HSV - 1}} and \emph{\ref{Metodo HSV - 3}} of
the Lemma \ref{Metodo HSV} say that $P_{j,m}^k \in \Fb$ and $n \leq M$.
Thus, since $\sigma$ is doubling on $\Fb$ we can deduce that
$\sigma(P_{j,l}^k)  \leq \C \sigma(P_{j,m}^k)$.
Then, by the Lemma \ref{duplica} again we have
\begin{eqnarray*}
  II
  & \leq & \C \sum_{(j,k)\in B_\lambda^2}
  \sum_{m=1}^{t_j^k} \int_{E_{j,m}^k} \bigg( \frac 1 {|\W_{t,P_j^k}|}
  \sum_{l=1}^{t_j^k} \int_{P_{j,m}^k} \sigma \bigg)^q u \\
   & \leq & \C \sum_{(j,k)\in B_\lambda^2} \sum_{m=1}^{t_j^k}
  \int_{E_{j,m}^k} \bigg( \frac 1 {|P_{j,m}^k|}
  \int_{P_{j,m}^k} \sigma \bigg)^q u \\
   & \leq & \C \sum_{(j,k)\in B_\lambda^2} \sum_{m=1}^{t_j^k}
  \int_{E_{j,m}^k} \Mb \big( \sigma \chi_{P_{j,m}^k} \big)^q u \,,
\end{eqnarray*}
where the last inequality holds because $E_{j,m}^k \subset P_{j,m}^k$.
Let $\set{P_i}$ be a maximal disjoint sub-collection of
$\{P_{j,m}^k\}$ with $1\leq m \leq t_j^k$ and $(j,k) \in B_\lambda^2$.
Then, since the $E_{j,m}^k$'s are pairwise disjoint, we have that
\begin{eqnarray*}
  II & \leq & \C \sum_i \mathop{\sum_{(j,k)\in B_\lambda^2}}_{P_{j,m}^k \subset P_i}
  \int_{E_{j,m}^k} \Mb \big( \sigma \chi_{P_{j,m}^k} \big)^q u \\
  & \leq & \C \sum_i \int_{P_i} \Mb \big( \sigma \chi_{P_i} \big)^q u \,.
\end{eqnarray*}

Now, by inequality \eqref{dos} and the fact that $q \geq p$ we get
\begin{eqnarray*}
  II & \leq & \C \sum_i \left( \int_{P_i} \sigma \right)^{q/p} \\
  & \leq & \C \left( \sum_i \int_{P_i} \sigma \right)^{q/p} \,.
\end{eqnarray*}

Finally, since the operator $T$ is defined on the cubes $P_j^k$ we
need to take again a maximal disjoint sub-collection of the family
$\{P_j^k\}$ with $(j,k) \in B_\lambda^2$. Let $\{P_s\}$ be such sub-collection.
Thus, since the $P_i$'s are disjoint and
$P_i \in \W_{t,P_j^k} \subset \W_{t,P_s}$ for some
$(j,k) \in B_\lambda^2$ and some $s$, by the definition of the operator $T$ we can estimate
\begin{eqnarray*}
II & \leq & \C \left( \sum_s \sum_{i:~P_i \subset \W_{t,P_s}} \int_{P_i} \sigma \right)^{q/p} \\
  & \leq & \C \left( \sum_s \int_{\W_{t,P_s}} \sigma \right)^{q/p} \\
  & \leq & \C \left( \frac 1 {\lambda} \sum_s \int_{P_s} g\,\sigma \right)^{q/p} \\
  & \leq & \C \left( \frac 1 {\lambda} \int_{\Omega} g\,\sigma \right)^{q/p}\,,
\end{eqnarray*}
as we wanted to prove. Then the proof of the Theorem is complete.
\end{proof}

\bi

\section{More manageable conditions on cubes}

Now we concentrate in the classes $\A_{p,q}^{\beta}$.
Since $\F_{\alpha} \subset \F_{\beta}$, whenever $\alpha \leq \beta$
we observe that $\A_{p,q}^{\beta} \subset \A_{p,q}^{\alpha}$.
Moreover, if $\A_{p,q}$ consist in all weights
for what \eqref{Apq condition} holds for every cube $Q \in \Real^n$,
it is clear that $\A_{p,q} \subset \A_{p,q}^{\beta}$.
This inclusion is proper. In fact taking
$u(x)=||x||^\alpha$ and $v(x)=||x||^\gamma$, with
$\gamma = (\alpha + n)\frac pq -n$, it is not difficult to see
that $(u,v) \in \A_{p,q}$ whenever $-n < \alpha < n(q-1)$.
However, if $\Omega= \Real^n - \{0\}$ and $0 < \beta < 1$,
we can check that $(u,v) \in \A_{p,q}^\beta$ for every
power $\alpha \in \Real$.

However, in the next Lemma, we show that, under
certain conditions on the weights the
classes $\A_{p,q}^{\beta}$ really are independent of $\beta$.

\begin{lem}\label{apqcoinciden}
Let $0 < \alpha < \beta < 1$. Let $u$ and $v$ be weights such that
$u,\sigma \in D_\alpha$.
Then \[\A_{p,q}^{\beta} \equiv \A_{p,q}^{\alpha}\,.\]
\end{lem}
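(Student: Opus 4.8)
The plan is to prove the two inclusions separately. Since $\F_\alpha\subset\F_\beta$, the inclusion $\A_{p,q}^\beta\subset\A_{p,q}^\alpha$ is immediate (restricting the condition \eqref{Apqbeta} to a smaller family of cubes), and in fact requires no hypothesis on the weights. So the whole work is in the reverse inclusion: assuming $(u,v)\in\A_{p,q}^\alpha$ and the doubling hypotheses $u,\sigma\in D_\alpha$, show that \eqref{Apqbeta} holds for every $Q\in\F_\beta$.

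Fix $Q=Q(x_Q,l_Q)\in\F_\beta$. The idea is to replace $Q$ by a controlled chain of cubes from $\F_\alpha$. If already $Q\in\F_\alpha$ there is nothing to do, so assume $\alpha\,\di(x_Q,\Omega^c)\le l_Q<\beta\,\di(x_Q,\Omega^c)$. First I would produce a cube $Q'\in\F_\alpha$ concentric with (or close to) $Q$ with $l_{Q'}\sim\alpha\,\di(x_Q,\Omega^c)$, so that $Q'\subset Q$ and $|Q|\le C_{\alpha,\beta}|Q'|$; this uses only that $\alpha<\beta$ and elementary geometry of the metric $\di$. Then, to pass from estimates on $Q'$ to estimates on $Q$, I would cover $Q$ by a bounded number $N=N(\alpha,\beta,n)$ of translates of $Q'$, each lying in $\F_\alpha$: indeed any cube $R$ with $l_R=l_{Q'}$ meeting $Q$ has center within distance $l_Q+l_{Q'}\lesssim\di(x_Q,\Omega^c)$ of $x_Q$, hence $\di(x_R,\Omega^c)\ge(1-C\alpha)\di(x_Q,\Omega^c)$ and so $l_R<\alpha\,\di(x_R,\Omega^c)$ provided $\alpha$ is adjusted by a fixed factor; the number of such cubes needed to cover $Q$ is bounded by $(l_Q/l_{Q'}+1)^n\lesssim(\beta/\alpha)^n$. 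Chaining neighbouring cubes in this cover and applying the doubling property $u,\sigma\in D_\alpha$ a bounded number of times, one gets $u(R)\le C\,u(Q')$ and $\sigma(R)\le C\,\sigma(Q')$ for every cube $R$ in the cover, whence $u(Q)\le C\,u(Q')$ and $\sigma(Q)\le C\,\sigma(Q')$.

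With these comparisons in hand the computation is routine:
\begin{align*}
\frac{u(Q)^{p/q}}{|Q|^{p/q}}\Big(\frac{\sigma(Q)}{|Q|}\Big)^{p-1}
&\le C\,\frac{u(Q')^{p/q}}{|Q'|^{p/q}}\Big(\frac{\sigma(Q')}{|Q'|}\Big)^{p-1}\le C,
\end{align*}
the last step by $(u,v)\in\A_{p,q}^\alpha$ since $Q'\in\F_\alpha$. This gives $(u,v)\in\A_{p,q}^\beta$, completing the proof.

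The main obstacle is the passage from one cube of $\F_\alpha$ inside $Q$ to a full doubling-type comparison of $u(Q)$ and $\sigma(Q)$ with $u(Q')$ and $\sigma(Q')$: one must check that all the intermediate cubes used in the chaining genuinely belong to $\F_\alpha$ (so that $D_\alpha$ applies) and that the number of doubling steps is bounded independently of $Q$. This is exactly the kind of Whitney-chain argument already used in Lemma \ref{solapamiento de nubes controlada} and Lemma \ref{Metodo C-U}, so I expect it to go through with the geometric estimates of Section 2; the only care needed is the bookkeeping of the constants depending on $\alpha$ and $\beta$.
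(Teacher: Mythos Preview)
Your plan is correct and would go through, but it is considerably more elaborate than the paper's argument. The paper simply takes the \emph{concentric} shrinking $\tilde Q=\tfrac{\alpha}{\beta}Q$, which lies in $\F_\alpha$ since $l_{\tilde Q}=\tfrac{\alpha}{\beta}l_Q<\alpha\,\di(x_Q,\Omega^c)$; then $Q\subset 2^k\tilde Q$ with $k$ determined by $2^{k-1}<\beta/\alpha\le 2^k$, and a $k$-fold application of doubling gives $u(Q)\le u(2^k\tilde Q)\le C\,u(\tilde Q)$ and likewise for $\sigma$, after which the $\A_{p,q}^\alpha$ condition on $\tilde Q$ (together with $|\tilde Q|\sim|Q|$) finishes the estimate in one line. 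No covering by translates, no chaining of neighbouring cubes, and no appeal to the lemmas of Section~2 is needed.

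What your route buys is a more explicit bookkeeping of which intermediate cubes lie in $\F_\alpha$---the obstacle you correctly flag at the end. The paper's concentric argument faces the analogous issue for the dilates $2^j\tilde Q$ with $j$ close to $k$ but glosses over it; in either approach the remedy is the same (shrink the starting cube by an extra harmless factor depending only on $\alpha,\beta$ so that every cube used stays in $\F_\alpha$). So your plan is sound, but once you notice that the single concentric cube $(\alpha/\beta)Q$ suffices, the whole proof collapses to three lines.
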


\begin{proof}
Let $0 < \alpha < \beta < 1$. By the previous observation,
the Lemma is proved if we show the inclusion
$\A_{p,q}^{\alpha} \subset \A_{p,q}^{\beta}$.
For this, given a cube $Q \in \F_{\beta} \backslash \F_{\alpha}$
we consider the cube $\tilde Q = \frac{\alpha}{\beta}Q$.
Taking $k \in \en$ such that $2^{k-1} < \frac{\beta}{\alpha} \leq 2^k$,
since $\tilde Q \in \F_{\alpha}$, by the doubling condition
on $u$ and $\sigma$ we get
\[ u(Q)^{p/q}\,\sigma(Q)^{p-1} \leq u(2^k \tilde Q)^{p/q}\,\sigma(2^k \tilde Q)^{p-1}
\leq \C u(\tilde Q)^{p/q}\,\sigma(\tilde Q)^{p-1} \leq \C|Q| \,, \]
which proves the lemma.
\end{proof}

\bi

\begin{lem}\label{Ainf-->RHI}
Let $\sigma \in A_{\infty}^{\beta}$. Then $\sigma$ satisfies a Reverse H\"older
inequality, i.e.
\begin{equation}\label{RHI}
\left( \frac{1}{|Q|} \int_Q \sigma^{1+\epsilon} \right)^{1/(1+\epsilon)}
\leq \C \frac{1}{|Q|} \int_Q \sigma \,,
\end{equation}
for every cube $Q \in \Fb$.
\end{lem}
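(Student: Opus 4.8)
The plan is to mimic the classical proof that $A_\infty$ implies a reverse H\"older inequality (Coifman--Fefferman), but carried out entirely inside the family $\Fb$, using the local Calder\'on--Zygmund machinery developed in Section~2. The starting point is the observation that the defining inequality \eqref{c-delta} of $A_\infty^\beta$ gives, for every cube $Q\in\Fb$ and every measurable $E\subset Q$,
\[
  |E|\leq c'\,\Big(\frac{\sigma(E)}{\sigma(Q)}\Big)^{1/\delta}|Q|\,,
\]
i.e. sets of small $\sigma$-measure have small Lebesgue measure; this self-improves to the statement that $\sigma$, restricted to subcubes sitting inside a fixed $Q\in\Fb$, behaves like an ordinary $A_\infty$ weight on $Q$. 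The strategy is then: fix $Q\in\Fb$, normalise so that $\frac1{|Q|}\int_Q\sigma=1$, and estimate $\int_Q\sigma^{1+\eps}$ by the layer-cake formula
\[
  \int_Q \sigma^{1+\eps}
  = \int_Q \sigma
  + \eps\int_0^\infty \lambda^{\eps-1}\Big(\int_{\{x\in Q:\,\sigma(x)>\lambda\}}\sigma\Big)\,d\lambda\,,
\]
and control $\sigma(\{x\in Q:\sigma>\lambda\})$ geometrically decaying in $\lambda$.

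To get that decay I would run a local Calder\'on--Zygmund stopping-time decomposition of $\sigma$ over $Q$ at heights $\lambda 2^{m}$, $m\geq 0$, producing for each level a family of maximal (dyadic-type) subcubes $\{Q_i^{(m)}\}$ on which the average of $\sigma$ exceeds the threshold but is bounded by a fixed multiple of it (the uniform control on the jump of averages between a cube and its "predecessor" comes from Lemma~\ref{Metodo C-U} together with the doubling of $\sigma$ on $\Fb$ and Lemma~\ref{duplica}, exactly as in the proof of Theorem~\ref{teo2}). On each $Q_i^{(m)}$ we have $\frac1{|Q_i^{(m)}|}\int_{Q_i^{(m)}}\sigma\sim\lambda 2^m$, hence $\sigma(Q_i^{(m)})\sim\lambda 2^m|Q_i^{(m)}|$; since the cubes at the next level are contained in those at the previous one, and by the $A_\infty^\beta$ inequality \eqref{c-delta} applied with $E=\bigcup_i Q_i^{(m+1)}\subset Q_{i_0}^{(m)}$ one obtains $|\bigcup_i Q_i^{(m+1)}|\leq \theta\,|\bigcup_i Q_i^{(m)}|$ for some fixed $\theta<1$ — this is the key self-improvement step. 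Iterating gives $|\bigcup_i Q_i^{(m)}|\leq \theta^m|Q|$ and therefore $\sigma(\{\sigma>\lambda 2^m\}\cap Q)\lesssim \lambda 2^m\,\theta^m\,|Q|$. Summing the resulting geometric series in $m$ and then integrating in $\lambda$ yields $\frac1{|Q|}\int_Q\sigma^{1+\eps}\leq C$ provided $\eps$ is small enough that $2^{1+\eps}\theta<1$, which is precisely \eqref{RHI}.

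The main obstacle is making the stopping-time/Calder\'on--Zygmund decomposition work cleanly inside $\Fb$, because $\Fb$ is not stable under the usual dyadic operations and a cube $Q\in\Fb$ need not have $10Q\in\Fb$. This is exactly the difficulty already confronted in Section~2, so I would reuse its output: apply Lemma~\ref{Metodo C-U}, splitting the selected cubes into those with $10Q\in\Fb$ (where one passes to a comparable dyadic cube $5P\subset 8Q$) and those with $10Q\notin\Fb$ (where one passes to a cube $R$ with $Q\subset\W_{t,R}$), and use Lemma~\ref{duplica} and Remark~\ref{nubes comparables} to see that the "cloud" of each selected cube has $\sigma$-measure comparable to $\sigma$ of the cube itself, with constants independent of the cube. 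Because of the bounded overlap from Lemma~\ref{solapamiento de nubes controlada}, replacing genuine dyadic subcubes by these clouds costs only a fixed constant, so the geometric-decay estimate above survives. Once the local CZ decomposition is in hand, the remaining computation (layer-cake plus summing a geometric series) is routine and I would not belabour it.
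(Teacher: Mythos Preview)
Your outline---Calder\'on--Zygmund stopping time inside $Q$, the $A_\infty$ condition forcing geometric decay of the level sets, then the layer-cake formula---is the classical Coifman--Fefferman argument, and it is exactly what the paper ultimately invokes by citing \cite{GCRF}. But you have misread where the obstacle lies, and the detour through Section~2 is both unnecessary and ill-suited.

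The machinery of Section~2 (Lemma~\ref{Metodo C-U}, clouds, Lemma~\ref{duplica}, bounded overlap) is built to handle the instability of $\Fb$ under \emph{enlargement}: a cube $Q\in\Fb$ need not have $10Q\in\Fb$. For the reverse H\"older inequality, however, one only ever \emph{subdivides} the fixed cube $Q$, and $\Fb$ is closed under that operation. Indeed, if $\tilde Q=\tilde Q(x',l')$ is a dyadic child of $Q=Q(x,l)\in\Fb$ (so $l=2l'$ and $\di(x,x')\le l'$), then $2l'=l<\beta\,\di(x,\Omega^c)\le\beta l'+\beta\,\di(x',\Omega^c)$ gives $l'<\frac{\beta}{2-\beta}\,\di(x',\Omega^c)<\beta\,\di(x',\Omega^c)$, so $\tilde Q\in\Fb$. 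Iterating, every dyadic descendant of $Q$ lies in $\Fb$; hence \eqref{c-delta} is available on every subcube produced by the stopping time, and the classical proof runs verbatim inside $Q$. This elementary observation is the paper's entire argument.

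Two further slips in your sketch: your appeal to ``the doubling of $\sigma$ on $\Fb$'' is both unnecessary (the jump control between a CZ cube and its dyadic parent comes from $|Q|=2^n|Q'|$, a property of Lebesgue measure alone) and circular (doubling of an $A_\infty$ weight is a \emph{consequence} of the reverse H\"older inequality you are proving). And the inequality you display at the outset, $|E|\le c'(\sigma(E)/\sigma(Q))^{1/\delta}|Q|$, is the reverse of what \eqref{c-delta} asserts; the two directions are equivalent for classical $A_\infty$, but that equivalence itself passes through reverse H\"older, so you should argue with \eqref{c-delta} as written.
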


\begin{proof}
We only need to observe that for every
cube $Q=Q(x,l) \in \Fb$ and any cube $\tilde Q = \tilde Q(x',l') \subset Q$ such that
$l = 2 \, l'$ it follow that $\tilde Q \in \Fb$. In fact, since $\di(x,x') \leq l'$
and $\beta < 1$
\[  l = 2\,l' \leq \beta \di(x,\Omega^c)
\leq \beta \di(x,x') + \beta \di(x',\Omega^c)
\leq \beta\,l' + \beta \di(x',\Omega^c)\,,\]
implies
\[  l' \leq \frac{\beta}{2-\beta}\, \di(x',\Omega^c) \leq \beta\, \di(x',\Omega^c)\,. \]
Then, the proof follows a similar way as in \cite{GCRF}.
\end{proof}

\bi

\begin{lem}\label{lema4.2}
Let $0 < \beta < 1$ and we consider $(u,v) \in \A_{p,q}^\beta$.
If $\sigma=v^{-1/(p-1)} \in A_{\infty}^\beta$ then there exists
$\tilde{p} < p$ and $\tilde{q} < q$ such that
$(u,v) \in \A_{\tilde{p},\tilde{q}}^{\beta}$.
\end{lem}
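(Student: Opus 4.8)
The plan is to exploit the self-improving (openness) property of the Reverse H\"older Inequality supplied by Lemma \ref{Ainf-->RHI}, together with a careful choice of the smaller exponents that keeps the product defining the $\A_{p,q}^{\beta}$ class under control.

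First, since $\sigma=v^{-1/(p-1)} \in A_\infty^\beta$, Lemma \ref{Ainf-->RHI} provides a number $\eps>0$ such that
\[
\left( \frac{1}{|Q|} \int_Q \sigma^{1+\eps} \right)^{1/(1+\eps)} \leq \C \frac{1}{|Q|} \int_Q \sigma
\]
for every $Q \in \Fb$. I would then pick $\tilde p \in (1,p)$ close enough to $p$ that $r:=(p-1)/(\tilde p-1)$ satisfies $1<r\leq 1+\eps$; this is possible because $(p-1)/(\tilde p-1)\to 1$ as $\tilde p\to p$, while $\tilde p$ can still be kept above $1$ since $p>1$. Set $\tilde q:=(q/p)\,\tilde p$, so that $1<\tilde p\leq\tilde q<q$ and, crucially, $\tilde p/\tilde q=p/q$. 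With this choice one has the algebraic identities $v^{-1/(\tilde p-1)}=\sigma^{r}$ and $r(\tilde p-1)=p-1$.

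Next, for each $Q\in\Fb$, H\"older's inequality with respect to the normalized measure $|Q|^{-1}\,dx$ gives $\frac{1}{|Q|}\int_Q\sigma^{r}\leq\big(\frac{1}{|Q|}\int_Q\sigma^{1+\eps}\big)^{r/(1+\eps)}$, and then the displayed Reverse H\"older Inequality yields
\[
\frac{1}{|Q|} \int_Q \sigma^{r} \leq \C \left( \frac{1}{|Q|} \int_Q \sigma \right)^{r};
\]
in particular $v^{-1/(\tilde p-1)}(Q)<\infty$. Combining this bound with the hypothesis $(u,v)\in\A_{p,q}^{\beta}$ from Definition \ref{DefiApqbeta} and the identities $\tilde p/\tilde q=p/q$ and $r(\tilde p-1)=p-1$, one gets
\[
\left( \frac{u(Q)}{|Q|} \right)^{\tilde p/\tilde q}\left( \frac{v^{-1/(\tilde p-1)}(Q)}{|Q|} \right)^{\tilde p-1} \leq \C \left( \frac{u(Q)}{|Q|} \right)^{p/q}\left( \frac{\sigma(Q)}{|Q|} \right)^{p-1} \leq \C
\]
for every $Q\in\Fb$, which is exactly $(u,v)\in\A_{\tilde p,\tilde q}^{\beta}$.

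There is no essential obstacle here; the argument is the local adaptation of the classical openness of the $A_{p,q}$ condition. The only points demanding a little attention are the choice $\tilde q=(q/p)\,\tilde p$, which forces $\tilde p/\tilde q=p/q$ and thereby allows the possibly unbounded factor $u(Q)/|Q|$ to be left untouched, and the bookkeeping of the exponent $r$ so that $\sigma^{r}=v^{-1/(\tilde p-1)}$ falls within the reverse H\"older range $r\leq 1+\eps$.
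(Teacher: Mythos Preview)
Your proof is correct and follows essentially the same route as the paper: invoke the Reverse H\"older Inequality from Lemma~\ref{Ainf-->RHI}, choose $\tilde q=(q/p)\tilde p$ so that the $u$--factor is unchanged, and use the algebraic identity $r(\tilde p-1)=p-1$ to convert the $\sigma$--average into the original $\A_{p,q}^\beta$ quantity. The only cosmetic difference is that the paper takes $\tilde p-1=(p-1)/(1+\eps)$ (i.e.\ $r=1+\eps$ exactly), while you allow any $r\in(1,1+\eps]$ and bridge the gap with an extra H\"older step; both arrive at the same conclusion.
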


\begin{proof}
Since $\sigma \in A_{\infty}^{\beta}$, by the Lemma
\ref{Ainf-->RHI} it follows that $\sigma$
satisfies a Reverse H\"older inequality as in \eqref{RHI}.
Thus, from the hypothesis on the weights we get
\begin{equation}\label{ecu10}
\frac{u(Q)}{|Q|}^{p/q}\,\left( \frac{1}{|Q|} \int_Q \sigma^{1+\epsilon}
\right)^{(p-1)/(1+\epsilon)} \leq \C.
\end{equation}
Now, taking $\delta > 0$ such that $(p-1)/(1+\epsilon)=p-\delta-1$ the Lemma
is proved defining $\tilde{p}=p-\delta$ and $\tilde{q}=\frac{p-\delta}{p}\,q < q$.
In fact,
\[  \sigma^{1+\epsilon} = v^{-(1+\epsilon)/(p-1)}
 = v^{-1/(\tilde{p}-1)} \,,\]
and noting that $p/q = \tilde{p}/\tilde{q}$ defining
$\tilde{\sigma}=v^{-1/(\tilde{p}-1)}$ by \eqref{ecu10}
we have $(u,v) \in \mathrm{A}_{\tilde{p},\tilde{q}}^{\beta}$.
\end{proof}

\bi

In the next Lemma we show the relation between the local and the centered
local maximal function.

\smallskip

\begin{lem}\label{Max local vs Max centrada}
Let $0 < \alpha < 1/4$. There exists $0 < \gamma < 1$ such that
\begin{equation}\label{Mb vs Mb'}
M_{\alpha}f(x) \leq 2^n M_{\gamma}^cf(x)\,,
\end{equation}
for every locally integrable function $f$ and every $x$ in $\Omega$.
\end{lem}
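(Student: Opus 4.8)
The plan is to show that every cube $Q=Q(x,l)\in\F_\alpha$ that is relevant to $M_\alpha f(x)$ is contained in a bounded dilate of a cube \emph{centered at the point $x$} that still belongs to $\F_\gamma$ for a suitably small $\gamma$; then the average of $|f|$ over $Q$ is controlled by the average over that centered cube, up to the factor coming from the ratio of volumes. Concretely, fix $x\in\Omega$ and let $Q=Q(z,l)\in\F_\alpha$ with $x\in Q$. Since $x\in Q=Q(z,l)$ we have $\di(x,z)\le l$, so the centered cube $\w Q:=Q(x,2l)$ contains $Q$. I first check that $\w Q\in\F_\gamma$ for an appropriate $\gamma$: using $l<\alpha\,\di(z,\Omega^c)$ and $\di(z,\Omega^c)\le\di(z,x)+\di(x,\Omega^c)\le l+\di(x,\Omega^c)$, one gets
\[
2l \;<\; 2\alpha\big(l+\di(x,\Omega^c)\big)\;=\;2\alpha\,l+2\alpha\,\di(x,\Omega^c),
\]
hence $(1-2\alpha)\,2l<2\alpha\,\di(x,\Omega^c)$, i.e. the half side $2l$ of $\w Q$ satisfies $2l<\dfrac{2\alpha}{1-2\alpha}\,\di(x,\Omega^c)$; since $0<\alpha<1/4$ the constant $\gamma:=\dfrac{2\alpha}{1-2\alpha}$ lies in $(0,1)$, so $\w Q=Q(x,2l)\in\F_\gamma$.

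With this inclusion in hand, the averaging estimate is immediate:
\[
\frac{1}{|Q|}\int_Q|f|\;\le\;\frac{|\w Q|}{|Q|}\cdot\frac{1}{|\w Q|}\int_{\w Q}|f|\;=\;2^n\,\frac{1}{|\w Q|}\int_{\w Q}|f|\;\le\;2^n\,M_\gamma^c f(x),
\]
where the last step uses that $\w Q$ is a cube centered at $x$ and belongs to $\F_\gamma$, so it is admissible in the supremum defining $M_\gamma^c f(x)$. Taking the supremum over all $Q\in\F_\alpha$ containing $x$ yields $M_\alpha f(x)\le 2^n M_\gamma^c f(x)$, which is \eqref{Mb vs Mb'}.

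The only point requiring any care is the verification that $\gamma<1$, which is exactly where the hypothesis $\alpha<1/4$ enters: it guarantees $1-2\alpha>1/2>0$ and in fact $\gamma=\frac{2\alpha}{1-2\alpha}<\frac{1/2}{1/2}=1$. Everything else is a one-line geometric containment plus the trivial volume comparison $|\w Q|/|Q|=2^n$, so there is no real obstacle; the statement is essentially a bookkeeping lemma translating between the off-center and centered families, and the proof above is complete as written.
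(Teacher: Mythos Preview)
Your proof is correct and follows essentially the same approach as the paper: enclose each admissible cube $Q\in\F_\alpha$ containing $x$ in the centered double $\w Q=Q(x,2l)$, verify $\w Q\in\F_\gamma$ via the triangle inequality on $\di(\cdot,\Omega^c)$, and use the volume ratio $2^n$. The only cosmetic difference is the constant: the paper obtains the slightly sharper $\gamma=\dfrac{2\alpha}{1-\alpha}$ (your subtraction actually gives $(1-\alpha)\cdot 2l<2\alpha\,\di(x,\Omega^c)$ rather than $(1-2\alpha)\cdot 2l$), but your weaker bound is still valid and your $\gamma=\dfrac{2\alpha}{1-2\alpha}$ is still in $(0,1)$ for $\alpha<1/4$, so nothing is lost.
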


\begin{proof}
Let $f$ be a locally integrable function.
For $0 < \alpha < 1/4$ and $x \in \Omega$ we consider cubes $Q$, $\tilde Q$
such that $x \in Q \in \F_{\alpha}$ and $\tilde Q$ is centered at $x$ with
$l_{\tilde Q} = 2\,l_Q$.
If we show that $\tilde Q \in \F_{\gamma}$, for some $0 < \gamma < 1$ then
the Lemma will be proved. In fact,
\[  \frac{1}{|Q|}\int_Q|f| \leq \frac{2^n}{|\tilde Q|}\int_{\tilde Q}|f| \leq
2^n M_{\gamma}^c f(x)\,,\]
then, taking the supremum over all cubes in $\F_{\alpha}$ containing $x$
we get \eqref{Mb vs Mb'}. So, we observe that
\[  l_{\tilde Q} \leq 2\,\alpha\,\di (x_Q,\Omega^c)
   \leq 2\,\alpha\,\di (x_Q,x) + 2\,\alpha\,\di (x,\Omega^c)
   \leq \alpha\,l_{\tilde Q} + 2\,\alpha\,\di (x,\Omega^c)\,,\]
thus
\[  l_{\tilde Q} \leq \frac{2\,\alpha}{1 - \alpha}\,\di (x,\Omega^c) \,.\]
Then, it is clear that $\tilde Q \in \F_\gamma$ with
$\gamma = \frac{2\,\alpha}{1 - \alpha} < 1$ since the choose of $\alpha$.
\end{proof}

\bi

\begin{proof}[Proof of the Theorem \ref{Mb'}]
That \eqref{Max cent} implies \eqref{condicion para dos pesos} is trivial
using the test function $\sigma \chi_Q$ for each cube $Q \in \Fb$ and
the definition of $\Mbc$.

On the other hand, since it is clear that
$\Vert \Mbc \Vert_{\infty} \leq \norm{f}_{\infty}$, if we prove
that $\Mbc$ is of weak type $(\tilde p,\tilde q)$ for some number
$\tilde p < p$ and $\tilde q < q$, by applying the Marcinkiewicz
interpolation theorem we will get the result.

In order to do this, let
$U_{\lambda}=\{x \in \Omega:~ \Mbc f(x) > \lambda\}$ and let
\[  \set{Q_x}_{x \in U_{\lambda}} =
\set{Q_x\,:~ Q_x \in \Fb\,,\text{ centered at }x\text{ and }
\frac{1}{|Q_x|}\int_{Q_x}|f| > \lambda} \,,\]
a covering for $U_{\lambda}$.
Then, by the Theorem \ref{BesiTeo} we can select a countable
subfamily of cubes $\set{Q_j}$ which still cover $U_{\lambda}$
and such that $\sum_j \chi_{Q_j}(x) \leq C(n)$.

Then, considering $\tilde p$ and $\tilde q$ provided by the
Lemma \ref{lema4.2} and taking into account the property
of the cubes in the covering we can write
\begin{eqnarray*}
  u(U_{\lambda}) & \leq & u\big(\mathop{\cup}_j Q_j\big) \\
  & \leq & \sum_j \frac{u(Q_j)}{|Q_j|^{\tilde q}}\,|Q_j|^{\tilde q} \\
  & \leq & \frac{C}{\lambda^{\tilde q}} \, \sum_j \frac{u(Q_j)}{|Q_j|^{\tilde q}}\,
  \left( \int_{Q_j}|f| \right)^{\tilde q}\\
  & = & \frac{C}{\lambda^{\tilde q}} \, \sum_j \frac{u(Q_j)}{|Q_j|^{\tilde q}}\,
  \left( \int_{Q_j}|f|\,v^{1/\tilde p}\,\,v^{-1/\tilde p} \right)^{\tilde q}\,.
\end{eqnarray*}

H\"older inequality with $\tilde p > 1$ and Lemma \ref{lema4.2} allows us to get
\begin{eqnarray*}
  u(U_{\lambda}) & \leq &
  \frac{C}{\lambda^{\tilde q}} \, \sum_j \frac{u(Q_j)}{|Q_j|^{\tilde q}}\,
  \left( \int_{Q_j}|f|^{\tilde p}\,v \right)^{\tilde q/\tilde p}
  \left( \int_{Q_j} v^{-1/(\tilde p - 1)} \right)^{(\tilde p - 1)\,
  \tilde q/ \tilde p}\\
  &=& \frac{C}{\lambda^{\tilde q}} \, \sum_j \left\{
  \frac{u(Q_j)}{|Q_j|}^{\tilde p/ \tilde q}
  \left( \frac{\tilde{\sigma}(Q_j)}{|Q_j|} \right)^{\tilde p -1}
  \right\}^{\tilde q/ \tilde p}
  \left( \int_{Q_j}|f|^{\tilde p}\,v \right)^{\tilde q/\tilde p}\\
  & \leq & \frac{C}{\lambda^{\tilde q}}
  \bigg( \sum_j \int_{Q_j}|f|^{\tilde p}\,v \bigg)^{\tilde q/\tilde p}\\
  & \leq & \frac{C}{\lambda^{\tilde q}}
  \bigg( \int_{\Omega}|f|^{\tilde p}\,v \bigg)^{\tilde q/\tilde p}\,,
\end{eqnarray*}
and the proof of the Theorem is complete.
\end{proof}

\bi

Now, we introduce the following maximal function. For each $0 < \beta < 1$ we get
\begin{equation}\label{Maximal Local truncada}
    \Ms{\beta}f(x)=\mathop{\sup}_{x \in Q \in \Fb
    \backslash \F_{\beta/4}} \frac 1 {|Q|} \int_Q |f(y)|\,dy \,.
\end{equation}

\bi

\begin{prop}\label{Acotacion de Max truncada}
Let $0 < \beta < 1$ and let $1 < p \leq q < \infty$. Given two weights
$u$ and $v$ such that $(u,v) \in \A_{p,q}^{\beta}$ and $\sigma \in \D_\beta$,
we have
\begin{equation}\label{Ms acotada}
\Ms{\beta} : L^p(v) \rightarrow L^q(u)\,.
\end{equation}
\end{prop}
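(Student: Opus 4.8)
The plan is to prove the strong-type bound \eqref{Ms acotada} directly, by a Calder\'on--Zygmund decomposition at the levels $2^{k}$ followed by the geometric reductions of Section~2. The only place where the weights really intervene is through one pointwise consequence of \eqref{Apqbeta}: for every cube $Q\in\Fb$,
\[
u(Q)\left(\frac1{|Q|}\int_Q f\right)^{q}\le\C\left(\int_Q f^{p}\,v\right)^{q/p},
\]
which follows by applying H\"older's inequality with exponents $p$ and $p'$ (using $v^{-p'/p}=\sigma$) and then raising the defining inequality of $\A_{p,q}^{\beta}$ to the power $q/p$. Everything else is a counting argument with the ``clouds''.

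First I would reduce, by a routine approximation (monotone convergence), to $f\ge0$ bounded with compact support, fix $t\in\na$ with $2^{-t}\le\beta/20$, and take the disjoint Whitney-type dyadic covering $\W_{t}$ of $\Omega$ from Lemma~\ref{Metodo HSV}. Put $A_{k}=\set{x\in\Omega:2^{k}<\Ms{\beta}f(x)\le2^{k+1}}$ and, for $x\in A_{k}$, choose $Q_{x}^{k}\in\Fb\setminus\F_{\beta/4}$ with $x\in Q_{x}^{k}$ and $|Q_{x}^{k}|^{-1}\int_{Q_{x}^{k}}f>2^{k}$. Since such a cube has half-side $\ge\frac\beta4\di(x_{Q_{x}^{k}},\Omega^{c})$, it satisfies $10Q_{x}^{k}\notin\Fb$, so part~\emph{\ref{Metodo C-U 2}} of Lemma~\ref{Metodo C-U} replaces it by a cube $R_{x}^{k}\in\W_{t}$ with $Q_{x}^{k}\subset\W_{t,R_{x}^{k}}$ and $|R_{x}^{k}|^{-1}\int_{R_{x}^{k}}f>c\,2^{k}$. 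Discarding repetitions, for each $k$ I get a pairwise disjoint family $\{R_{i}^{k}\}_{i}\subset\W_{t}$ with $A_{k}\subset\bigcup_{i}\W_{t,R_{i}^{k}}$.

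The computation is then: $\int_{\Omega}(\Ms{\beta}f)^{q}u\le\C\sum_{k}2^{kq}u(A_{k})\le\C\sum_{k,i}2^{kq}\,u(\W_{t,R_{i}^{k}})$; reorganizing the double sum according to which cube $R\in\W_{t}$ was selected, and noting that a given $R$ can occur only at levels $k$ with $c\,2^{k}<|R|^{-1}\int_{R}f$ (so the inner geometric series in $k$ is controlled by $(|R|^{-1}\int_{R}f)^{q}$), this is $\le\C\sum_{R\in\W_{t}}u(\W_{t,R})\,(|R|^{-1}\int_{R}f)^{q}$. For fixed $R$ I write $\W_{t,R}=\bigcup_{m\le N}R_{m}$ with $R_{m}\in\W_{t}(R)$ disjoint, $N\le M$, each $R_{m}\in\Fb$ comparable in size and position to $R$; by $\sigma\in\D_{\beta}$ one gets $\sigma(\W_{t,R})=\sum_{m}\sigma(R_{m})\le\C\,\sigma(R)$ (equivalently, Lemma~\ref{duplica} and Remark~\ref{nubes comparables}). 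Then, using $R\subset\W_{t,R}$, H\"older on $\W_{t,R}$, this doubling fact, and \eqref{Apqbeta} on each $R_{m}$ (with $|R_{m}|\simeq|R|$), I obtain $u(\W_{t,R})\,(|R|^{-1}\int_{R}f)^{q}\le\C(\int_{\W_{t,R}}f^{p}v)^{q/p}$. Finally, since $q/p\ge1$ and the clouds $\{\W_{t,R}\}_{R\in\W_{t}}$ have bounded overlap, $\sum_{R\in\W_{t}}(\int_{\W_{t,R}}f^{p}v)^{q/p}\le(\sum_{R\in\W_{t}}\int_{\W_{t,R}}f^{p}v)^{q/p}\le\C(\int_{\Omega}f^{p}v)^{q/p}$, giving \eqref{Ms acotada}.

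The main obstacle is the pair of geometric facts about clouds of cubes from the fixed family $\W_{t}$: that $\sigma(\W_{t,R})\le\C\,\sigma(R)$ and that $\sum_{R\in\W_{t}}\chi_{\W_{t,R}}\le\C$. These are the analogues of Lemma~\ref{duplica} and Lemma~\ref{solapamiento de nubes controlada}, but those are stated for cubes $Q$ with $10Q\notin\Fb$, whereas part~\emph{\ref{Metodo HSV - 1}} forces $10R\in\Fb$ for $R\in\W_{t}$; so I would have to rework those arguments slightly, using only that $\W_{t}$ is a disjoint family of Whitney-type cubes --- whence $\N_{\beta}(R)$ meets boundedly many, mutually comparable cubes of $\W_{t}$, and $R'\cap\N_{\beta}(R)\ne\emptyset\iff R\cap\N_{\beta}(R')\ne\emptyset$. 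A secondary point is that the convergence of the sum over $k$ is not evident a priori (the $A_{k}$ with very negative $k$ may carry large $u$-mass) and is justified only a posteriori by the final estimate, after the standard truncation to $k\ge-N$ and passage to the limit $N\to\infty$.
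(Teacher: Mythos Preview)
Your argument is correct (and the two geometric adaptations you flag do go through for the fixed Whitney family $\W_t$: the set $\W_t(R)$ is still finite with mutually comparable cubes, and the relation $R'\cap\N_\beta(R)\neq\emptyset$ is symmetric, giving both the doubling $\sigma(\W_{t,R})\le C\,\sigma(R)$ and the bounded overlap $\sum_{R\in\W_t}\chi_{\W_{t,R}}\le C$). But the route is considerably more elaborate than the paper's, which bypasses the level-set decomposition altogether.

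The paper's key observation is that the truncation in $\Ms{\beta}$ already forces every admissible cube to sit at Whitney scale. If $x\in Q_j\in\W_t$ and $Q_x\in\Fb\setminus\F_{\beta/4}$ contains $x$, then $l_{Q_x}\ge\tfrac{\beta}{4}\,\di(x_{Q_x},\Omega^c)$ combined with $l_j\le 2^{-t-1}\,\di(x_j,\Omega^c)$ yields $|Q_j|\le c_\beta\,|Q_x|$, while trivially $Q_x\subset\N_\beta(Q_j)$ because $Q_x$ meets $Q_j$. Choosing one $Q_x$ nearly realising the supremum therefore gives the \emph{pointwise} bound
\[
\Ms{\beta}f(x)\ \le\ \frac{C}{|Q_j|}\int_{\N_\beta(Q_j)}|f|\qquad(x\in Q_j),
\]
and the proposition follows by integrating over each $Q_j$, applying H\"older together with $\sigma(\N_\beta(Q_j))\le C\,\sigma(Q_j)$ (Remark~\ref{nubes comparables}), using $\A_{p,q}^\beta$ on $Q_j$ itself, and summing via Lemma~\ref{solapamiento de nubes controlada}. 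No Calder\'on--Zygmund stopping, no reorganisation by level, and the lemmas apply verbatim because the paper works with the clouds $\N_\beta(Q_j)$ rather than the sets $\W_{t,R}$.

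In short, you have imported the general machinery built for $\Mb$ (as in the proof of Theorem~\ref{teo2}), which certainly works but hides the reason Proposition~\ref{Acotacion de Max truncada} is easy: the restriction to $\Fb\setminus\F_{\beta/4}$ makes the problem single-scale, so a direct comparison with a fixed Whitney cover already does the job.
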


\begin{proof}
For each $x \in \Omega$ we choose a
cube $Q_x$ such that $x \in Q_x \in \Fb \backslash \F_{\beta/4}$ and
\[  \Ms{\beta}f(x) \leq \frac 2 {|Q_x|} \int_{Q_x} |f| \,.\]

Now, let $t$ be such that $2^{-t} \leq \beta/20$
and we consider the covering $\W_t$ of $\Omega$ provided
by the Lemma \ref{Metodo HSV}. For simplicity we write
\[  \W_t = \bigcup_j Q_j = \bigcup_j Q_j(x_j,l_j)\,,\]
where the cubes $Q_j$ are disjoint.
Since $x \in Q_j$ for some $j$, denoting $x_Q$ the center
of $Q_x$, we get
\begin{eqnarray*}
  \di(x_Q,\Omega^c) & \leq & \di(x_j,\Omega^c) + \di(x_Q,x) + \di(x_j,x) \\
   & \leq & \di(x_j,\Omega^c) + \beta\,\di(x_Q,\Omega^c) + \beta\,\di(x_j,\Omega^c)\,.
\end{eqnarray*}

This implies that their centers holds
\[  \di(x_j,\Omega^c) \leq \frac{1+\beta}{1-\beta} \di(x_Q,\Omega^c)\,. \]

Since $10Q_j \in \Fb$ by part
{\em \ref{Metodo HSV - 1}} of the Lemma \ref{Metodo HSV}, the inequality
above and the fact $Q_x \not \in \F_{\beta/4}$ we have that
\[  l_j < \frac \beta {10} \di(x_j,\Omega^c)
 \leq \frac \beta {10} \frac {1+\beta} {1-\beta} \di(x_Q,\Omega^c)
 \leq \frac \beta {10} \frac {1+\beta} {1-\beta} \frac 4\beta l_Q
 = \frac 25 \frac {1+\beta} {1-\beta} l_Q = c_\beta\,l_Q\,. \]

Thus, $|Q_j| \leq \C |Q_x|$. Now, it is clear that
$x \in \N_{\beta}(Q_j)$ since $x \in Q_x \cap Q_j$.
Then, by H\"older inequality we can proceed as follows
\begin{eqnarray*}
  \int_{\Omega}\left(\Ms{\beta}f \right)^q \,u
  & \leq & \C \sum_j \int_{Q_j} \frac 1 {|Q_x|^q}
  \bigg( \int_{Q_x} |f(y)|\,dy \bigg)^q \,u \\
  & \leq & \C \sum_j \frac {u(Q_j)} {|Q_j|^q}
  \bigg( \int_{\N_{\beta}(Q_j)} |f| \bigg)^q \\
  & \leq & \C \sum_j \frac {u(Q_j)} {|Q_j|^q} ~
  \sigma\big(\N_{\beta}(Q_j)\big)^{q(p-1)/p}
  \bigg( \int_{\N_{\beta}(Q_j)} |f|^p\,v \bigg)^{q/p}\,. \\
\end{eqnarray*}

Since $\sigma \in \D_\beta$, by Remark \ref{nubes comparables} we get
$\sigma\big(\N_{\beta}(Q_j)\big) \leq \C \sigma(Q_j)$. In addition,
we use the fact that $(u,v) \in \A_{p,q}^{\beta}$, $p \leq q$ and
by the Lemma \ref{solapamiento de nubes controlada} we can conclude that
\begin{eqnarray*}
  \int_{\Omega}\left(\Ms{\beta}f \right)^q \,u
  & \leq & \C \sum_j \bigg( \frac {u(Q_j)^{p/q}} {|Q_j|^p} ~
  \sigma(Q_j)^{p-1}\bigg)^{q/p}\,
  \bigg( \int_{\N_{\beta}(Q_j)} |f|^p\,v \bigg)^{q/p} \\
  & \leq & \C\bigg(\sum_j \int_{\N_{\beta}(Q_j)} |f|^p\,v \bigg)^{q/p} \\
  & \leq & \C\bigg(\int_{\Omega} |f|^p\,v \bigg)^{q/p}\,,
\end{eqnarray*}
which gives \eqref{Ms acotada}.
\end{proof}

\bi

Now, we are going to apply the results mentioned above to prove
the analogous result for the local maximal.

\bi

\begin{proof}[Proof of the Theorem \ref{teo3}]
Clearly \eqref{Max beta} implies \eqref{Apqbeta}.
Conversely let $0<\beta<1$. By Lemma \ref{Max local vs Max centrada}
with $\alpha = \beta/4$ there exists $0 < \gamma < 1$ such that the inequality
\begin{eqnarray} \nonumber\label{ecu11}
M_{\beta}f(x) & \leq & M_{\beta/4}f(x) + M_{(\beta/4,\beta]}f(x)\\
\label{ecu11} & \leq & M^c_{\gamma}f(x) + M_{(\beta/4,\beta]}f(x)\,,
\end{eqnarray}
for every $x \in \Omega$. Then, by Theorem \ref{Mb'} and
Proposition \ref{Acotacion de Max truncada} we have
\[  \int_{\Omega} M_{\beta}f^q\,u
   \leq \C \left( \int_{\Omega} M^c_{\gamma}f^q\,u
  + \int_{\Omega} M_{(\beta/4,\beta]}f^q\,u \right)
  \leq \C \left( \int_{\Omega} f^p\,v \right)^{q/p} \,,\]
and the proof of the Theorem is complete.
\end{proof}

\bi

\bibliographystyle{plain}

\end{document}